

\documentclass[preprint,12pt]{elsarticle}




\usepackage{amssymb}
\usepackage{amsthm}


\usepackage{amsmath}
\usepackage{url}

\newtheorem{theorem}{Theorem}
\newtheorem{lemma}{Lemma}
\newtheorem{cor}{Corollary}
\newtheorem{case}{Case}
\newtheorem*{definition}{Definition}
\newtheorem{example}{Example}
\newtheorem{conjecture}{Conjecture}
\newtheorem{result}{Result}

\DeclareMathOperator{\PF}{PF}

\DeclareMathOperator{\rank}{rank}
\usepackage{graphicx,color}

\DeclareMathOperator{\spl}{Split}
\DeclareRobustCommand{\En1}{\mathcal{E}_{n,1}}
\DeclareRobustCommand{\Q}{\mathcal{Q}_{n-1,n}}
\DeclareRobustCommand{\rdinv}{\operatorname{dinv}_{n-1,n}}
\DeclareRobustCommand{\dinv}{\operatorname{dinv}}
\DeclareRobustCommand{\rarea}{\operatorname{area}_{n-1,n}}
\DeclareRobustCommand{\area}{\operatorname{area}}
\newcommand{\ses}{\, = \,}

\newcommand{\coarea}{\operatorname{coarea}}
\newcommand{\arm}{\operatorname{arm}}
\newcommand{\leg}{\operatorname{leg}}
\newcommand{\pdinv}{\operatorname{pdinv}}
\newcommand{\tdinv}{\operatorname{tdinv}}
\newcommand{\mdinv}{\operatorname{maxtdinv}}
\newcommand{\ides}{\operatorname{ides}}

\journal{Discrete Mathematics}

\begin{document}

\begin{frontmatter}



\title{A simpler formula for the number of diagonal inversions of an \((m,n)\)-Parking Function and a returning Fermionic formula}


\author[Hicks]{Angela Hicks\fnref{fn2}\corref{cor2}}\fntext[fn2]{Supported by NSF grant DMS 1303761.}\cortext[cor2]{Corresponding author}
\address[Hicks]{Stanford University, Department of Mathematics, building 380, Stanford, California 94305 USA}
\ead{ashicks@stanford.edu}

\author[Leven]{Emily Leven \fnref{fn1}\corref{cor1}}\fntext[fn1]{Supported by NSF grant DGE 1144086}\cortext[cor1]{Principal corresponding author}
\address[Leven]{University of California, San Diego, Department of Mathematics, 9500 Gilman Dr. \#0112, San Diego, CA 92093 USA}
\ead{esergel@ucsd.edu}

\begin{abstract}

Recent results have placed the classical shuffle conjecture of Haglund et al. in a broader context of an infinite family of conjectures about parking functions in any rectangular lattice. The combinatorial side of the new conjectures has been defined using a complicated generalization of the dinv statistic which is composed of three parts and which is not obviously non-negative. Here we simplify the definition of dinv, prove that it is always non-negative, and give a geometric description of the statistic in the style of the classical case. We go on to show that in the $(n-1) \times n$ lattice, parking functions satisfy a fermionic formula that is similar to the one given in the classical case by Haglund and Loehr.

\end{abstract}

\begin{keyword}

rational parking functions \sep shuffle conjecture \sep dinv \sep diagonal inversions \sep fermionic formula




\MSC[2010] 05E05 \sep 05E10

\end{keyword}

\end{frontmatter}



\section{Introduction}

The classical shuffle conjecture of \cite{HHLRU} gives a well-studied combinatorial expression for the bigraded Frobenius characteristic of the diagonal harmonics:
\begin{equation}
\nabla e_n =\sum_{\PF \in \PF_n}t^{\area(PF)}q^{\dinv({PF})}F_{\ides({PF})}.
\end{equation}
For a survey of work on this conjecture, see \cite{HagBook}.  Recent results by Gorsky and Negut in \cite{GorskyNegut}, \cite{Negut2012a}, and \cite{Negut2013}, along with work by Schiffmann and Vasserot in \cite{Schiffmann2011} and \cite{Schiffmann2013}, when combined with combinatorial results of Hikita in \cite{Hakita} and Gorsky and Mazin in \cite{Mazin1} and \cite{Mazin2}, place this conjecture in the broader context of an infinite family of conjectures about collections of parking functions in an \(m\times n\) lattice for any coprime \(m,n\).
One such conjecture (with \(Q_{m,n}\) as defined below) gives that for $m$ and $n$ coprime
\begin{equation}
Q_{m,n}(-1)^n=\sum_{\PF\in \PF_{m,n}}t^{\area(PF)}q^{\dinv({PF})}F_{\ides({PF})}.
\end{equation}
It has previously been concluded that the new definitions of dinv and area, as given by Hikita, Gorsky, and Mazin, are in fact a generalization of the classical definitions (where the classical case is when $m=n+1$).  Work by Bergeron, Garsia, Leven, and Xin in \cite{CompositionalRational} and \cite{RationalOperators} has reformulated and expanded on the work of Gorsky and Negut, giving the algebraic left hand side of the above equation in terms that will appear familiar to those who know the classical conjecture and expanding it to include the noncoprime case. It is the authors' hope that this paper will similarly illuminate connections between the combinatorial side of the \((m,n)\) conjecture and the classical case.  

In particular, in the first half of the paper, we show that this new definition of dinv is positive, give a geometric interpretation mirroring the geometric interpretation of the classical case, and reduce the computation of the dinv from three terms to a classical looking term and a single ``dinv correction'' term, based solely on the shape of the path.  In the second half, we use this interpretation to prove an implication of the new \((m,n)\) conjecture in the case where \(m=n-1\) and explore the meaning of the dinv correction term in that case by giving its contribution to a new fermionic formula, reminiscent of the fermionic formula (\cite{dinvtree}) of Haglund and Loehr in the classical case.
\subsection{Summary of Results}  For the easy reference of the reader who is already familiar with the rational parking functions and their statistics, we will begin by briefly summarizing our results.  The terms will be defined and the results proved in future sections.

First, call the \emph{rational diagonal} of a north step the region of an $(m,n)$ parking function that is left of the step and between the two lines of slope $\frac{n}{m}$ that intersect the endpoints of the step, including the top line, but not the bottom. (See Figure \ref{diagonals}.)
\begin{figure}
\begin{center}
\includegraphics[width=5cm]{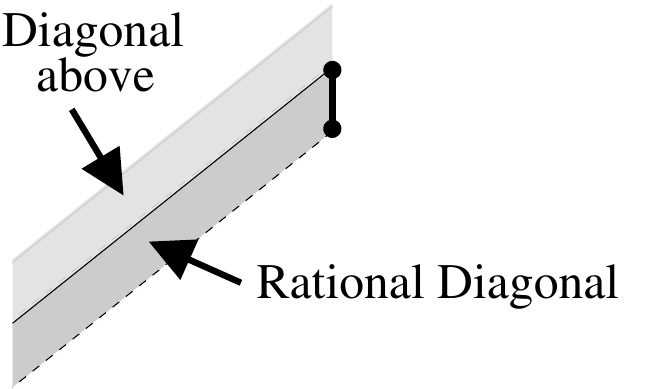}
\caption{The rational diagonal of a north step, and the diagonal above it.  Note that the southern border of each diagonal is not part of the diagonal itself.}\label{diagonals}
\end{center}
\end{figure}
\begin{result} [Theorem \ref{Result1}]  The dinv of a rational Dyck path (and thus the path dinv of a parking function) is the number of times an east step intersects the rational diagonal of a north step.
\end{result}

\noindent See Figure \ref{pathsum} for examples. 

\begin{figure}
\begin{center}
\includegraphics[width=4cm]{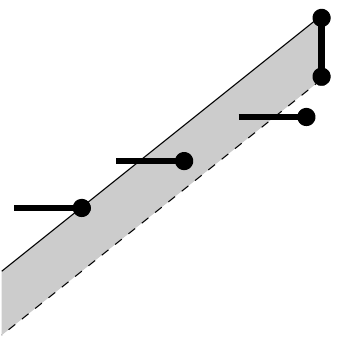}
\caption{Each east step makes a diagonal inversion with the north step.  East steps are drawn without their endpoints to indicate they may extend further.}\label{pathsum}
\end{center}
\end{figure}

Refer to the rational diagonal \emph{above} a north step $N$ as the rational diagonal of the (possibly imagined) north step directly above $N$.  Associating a labeled cell with the north step to its left, say that a labeled cell $c$ is in a diagonal $d$ if its north endpoint is in $d$.  Then $c<c'$ form a diagonal inversion if and only if:
\begin{enumerate}
\item (\emph{primary}) $c$ is in the rational diagonal of $c'$.
\item (\emph{secondary}) $c'$ is in the rational diagonal above $c$.
\end{enumerate}

\noindent A north step $N$ and and east step $E$ form a dinv correction if and only if:
\begin{enumerate}
\item (\emph{positive}) $E$ is completely contained in the rational diagonal of $N$. (This only occurs for $n<m$.)
\item (\emph{negative}) The endpoints of $E$ fall outside the rational diagonal of $N$ on opposing sides.  (This only occurs for $n>m$.)
\end{enumerate}
\begin{result}[Theorem \ref{Result2}] The dinv of a rational parking function is the total number of primary and secondary diagonal inversions, plus the number of positive dinv corrections (if $n<m$), minus the number of negative dinv corrections (if $m<n$).
\end{result}See Figure \ref{dinvsum}.
\begin{result} [Theorem \ref{Result3}] The dinv of a rational parking function is always non-negative.
\end{result}

\begin{figure}
\begin{center}
\includegraphics[width=8cm]{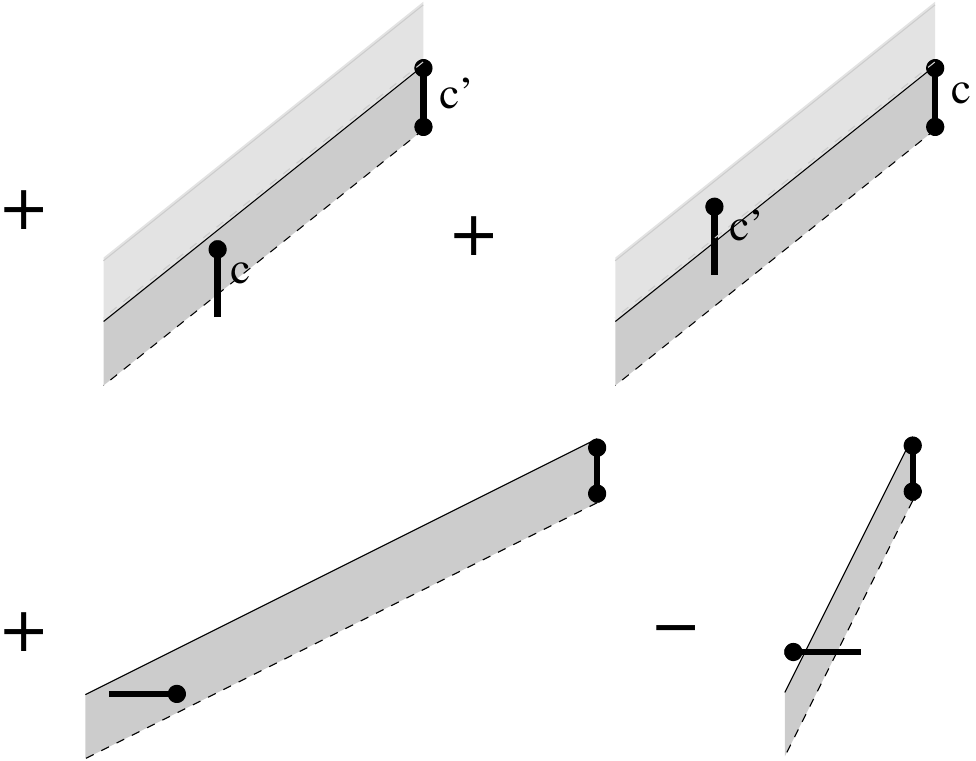}
\caption{For $c<c'$, primary and secondary inversions, as well as negative and positive dinv corrections.  The total number (with appropriate sign) give the dinv of a rational parking function.  Steps drawn without an endpoint indicate that the endpoint may be shifted sligtly to fall on the nearest marked line.}\label{dinvsum}
\end{center}
\end{figure}

Turning our attention to the case $m=n-1$, we show that:
\begin{result} [Theorem \ref{Result4}] Dinv corrections are in one to one correspondence with corners formed by an east step followed by a north step in the underlying Dyck path.
\end{result}
Define $\En1$ to be the set of classical parking functions with only a single car on the main diagonal, and $\Q$ to be the set of the rational parking functions on the $n-1$ by $n$ grid.
\begin{result} [Theorem \ref{Result5}] As is predicted algebraically by the identity $$\nabla E_{n,1}= t^{n-1}Q_{n-1,n}\, (-1)^n,$$ we have that \begin{align*}\sum_{\PF\in\En1}t^{\area(\PF)}q^{\dinv(\PF)}&F_{\ides(\PF)}
\\&=t^{n-1}\sum_{\PF\in \Q}t^{\rarea(\PF)}q^{\rdinv(\PF)}F_{\ides(\PF)}.\end{align*}  In particular, the families of parking functions in $\Q$ with the same set of cars on each classical diagonal sum to the same fermionic formula as in the classical case.
\end{result}

\section{Background}

\subsection{The classical conjecture}
Classical parking functions have been represented in a number of ways since their introduction in the 1960s.  For our purposes, it is best to consider them as represented on an $n\times n$ lattice with a ``main diagonal'' running through their center from southwest to northeast.  Construct a parking function on this lattice as follows: Add a Dyck path (a set of north and east steps, weakly above the main diagonal, also from southwest to northeast).  Call the squares directly east of north steps ``parking spaces'' and fill them with ``cars,'' integers 1 to \(n\), each occurring exactly once.  In particular, make sure that cars are strictly increasing within a single column. See Figure \ref{pfex} for an example.  

\begin{figure}[htbp]
\begin{center}
\input{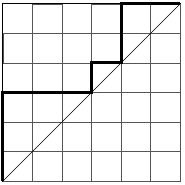_t}
\caption{A parking function}
\label{pfex}
\end{center}
\end{figure}

\begin{definition}[area]  The area of a parking function is defined to be the number of complete squares between the Dyck path and the main diagonal.
\end{definition}

Any two cars in the same diagonal are called ``primary attacking.''  If the smaller car is to the left of the bigger car, those two cars in particular form a ``primary diagonal inversion.''  If a car $b$ is in the diagonal above the diagonal of a car $s$, and $b$ is to the left of $s$, then $s$ and $b$ are ``secondary attacking.''  Furthermore, if $s<b$, then the two cars analogously form a ``secondary diagonal inversion.''

\begin{definition}[dinv] The dinv of a parking function is the total number of primary and secondary inversion pairs in the parking function.
\end{definition}

Finally, the ``reading word'' (or just ``word'') of a parking function is the permutation found by reading the cars by diagonals, moving northeast to southwest, and working from north to south along each diagonal.  Recall that the i-descent set of a permutation is the descent set of the inverse permutation.  Then

\begin{definition}[ides] The i-descent set (ides) of a parking function is just the i-descent set of the reading word of the parking function.
\end{definition}

\begin{example} The parking function in Figure \ref{pfex} has area 4.  It has dinv 4 with the four diagonal inversions: $(1,4)$, $(1,2)$, $(2,3)$ and $(3,6)$.  Its reading word is $(5,6,3,2,4,1)$ with resulting i-descent set $\{1,2,4\}$. 
\end{example}

If we recall that $\nabla$ is the linear operator defined on the modified Macdonald polynomials $\widetilde{H}_\mu[x;q,t]$ by
\[\nabla\widetilde{H}_\mu[x;q,t]=t^{\sum_{c \in \mu} leg(c)}q^{\sum_{c \in \mu} arm(c)}\widetilde{H}_\mu[x;q,t],\] $e_n$ is the elementary symmetric function, and $F_S$ is the Gessel quasisymmetric function of degree $n$ indexed by the set $S \subseteq \{1,2,\dots,n-1\}$, then we have sufficient background to understand the shuffle conjecture as first introduced in \cite{HHLRU}:

\begin{conjecture} \[\nabla e_n =\sum_{\PF \in \PF_n}t^{\area(PF)}q^{\dinv({PF})}F_{\ides({PF})}.\]
\end{conjecture}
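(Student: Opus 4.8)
This final statement is the shuffle conjecture of Haglund, Haiman, Loehr, Remmel and Ulyanov, which at the time of writing is a major open problem, so rather than a short argument I will describe the line of attack that seems most promising (and which, in the classical case, is essentially the route that works).

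\textbf{Step 1: pass to the compositional refinement.} The plan is to first establish the compositional shuffle conjecture of Haglund, Morse and Zabrocki, which sharpens the identity above: for each composition $\alpha\models n$,
\[
\mathbf{C}_{\alpha_1}\mathbf{C}_{\alpha_2}\cdots \mathbf{C}_{\alpha_k}\cdot 1 \;=\; \sum_{PF} t^{\area(PF)}\,q^{\dinv(PF)}\,F_{\ides(PF)},
\]
the sum on the right running over parking functions whose supporting Dyck path touches the main diagonal at exactly the lattice points dictated by $\alpha$, and the $\mathbf{C}_a$ being the plethystic ``touch-point'' operators of Haglund--Morse--Zabrocki. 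Summing over all $\alpha\models n$ and invoking the known symmetric-function identity $\sum_{\alpha\models n}\mathbf{C}_{\alpha_1}\cdots\mathbf{C}_{\alpha_k}\cdot 1=\nabla e_n$ recovers the conjecture. The point of the refinement is that the right-hand side now has an obvious recursive structure in the number of touch points, matching the factorization of the operator on the left.

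\textbf{Step 2: realize both sides inside one algebra of operators.} Next I would introduce an algebra $\mathbb{A}$ generated by ``creation/annihilation'' operators $d_+,d_-$ that raise and lower the number of symmetric-function alphabets, together with Hecke-type operators $T_i$, subject to a small explicit set of relations, and show that each $\mathbf{C}_a$ is a word in these generators applied in a fixed pattern. The ring of symmetric functions, suitably extended to several alphabets, then carries a representation of $\mathbb{A}$. In parallel, one builds a second representation of $\mathbb{A}$ on the vector space spanned by labeled Dyck-path objects, graded by $t^{\area}q^{\dinv}$ and recording $\ides$, where $d_+$ prepends a new tall leftmost column (or a new rank), $d_-$ is a contraction, and the $T_i$ permute columns in a $q$-deformed way. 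The desired identity then follows from an isomorphism of $\mathbb{A}$-modules: check that both representations are generated by the same element $1$ and that the generators act identically, so the two sides of the displayed equation are forced to agree.

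\textbf{Main obstacle.} The hard part is verifying that the combinatorial side genuinely supports the operators $d_+,d_-,T_i$ with the correct relations. This requires explicit statistic-tracking bijections --- and sign-reversing involutions for the subtler relations --- on parking-function-like objects, and the single most delicate point is computing how $\dinv$ changes when a new leftmost column is inserted: this is exactly where LLT-polynomial identities and the compatibility of $\dinv$ with the $F_{\ides}$-expansion come into play, and it is the step from which the whole argument draws its difficulty. A further, more bookkeeping-heavy obstacle is matching the touch-point grading of the parking functions with the precise way $\mathbf{C}_\alpha$ factors through the $\mathbf{C}_{\alpha_i}$. Once those bijections are in place, assembling them into the module isomorphism and specializing to read off the stated formula is comparatively routine; the cases $q=1$ and $t=1$, and the coefficient of any single Gessel function, can be verified directly from Haglund's and Haiman's earlier formulas and serve as sanity checks along the way.
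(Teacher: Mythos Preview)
The paper does not prove this statement at all: it is explicitly labeled a \emph{Conjecture} (the classical shuffle conjecture of Haglund, Haiman, Loehr, Remmel and Ulyanov) and appears only as background in the introduction. There is therefore no ``paper's own proof'' to compare your proposal against.

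That said, your outline is a reasonable high-level sketch of the Carlsson--Mellit strategy that ultimately did resolve the conjecture: pass to the compositional refinement of Haglund--Morse--Zabrocki, build an algebra (their $\mathbb{A}_{q,t}$) acting on both a symmetric-function module and a combinatorial module, and deduce the identity from a module isomorphism. You correctly identify the hardest step as verifying the operator relations on the combinatorial side. But as a self-contained proof this is far from complete --- the actual construction of the combinatorial representation and the verification of the relations occupy the bulk of the Carlsson--Mellit paper and require substantial new ideas (the ``Dyck path algebra'' and its polynomial representation) that your sketch only gestures at. For the purposes of the present paper, nothing like this is claimed or needed; the conjecture is simply quoted as motivation for studying the rational generalization.
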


\subsection{The algebraic side of the $(m,n)$ shuffle conjecture}
Next, to keep the paper self-contained, we give a short summary of the algebraic side of the conjecture, choosing to mimic the notation of \cite{CompositionalRational}, a much more complete reference.  The starting point for our definitions is a pair of classically defined operators (where $[\,]$ indicates plethystic notation and $M=(1-t)(1-q)$):
\begin{align}
D_k F[X]=\, &F\left[X+\frac{M}{z}\right]\left.\sum_{i\geq 0} (-z)^ie_i[X]\right|_{z^k}\\
D_k^* F[X]=\, &F\left[X-\frac{(1-1/t)(1-1/q)}{z}\right]\left.\sum_{i\geq 0} z^ih_i[X]\right|_{z^k}
\end{align}
for any symmetric function $F$.
We use these operators to recursively build an operator $Q_{m,n}$ for each coprime pair $(m,n)$.  If $(a,b)$ is the closest lattice point below the line from $(0,0)$ to $(m,n)$ and $(m,n)=(a,b)+(c,d)$, define \[\spl(m,n)=(a,b)+(c,d).\] Then we can recursively define the relevant operators as:
\[Q_{m,n}=\begin{cases}\frac{1}{M}[Q_{c,d},Q_{a,b}]& \mbox{ if } m>1 \mbox{ and }\spl(m,n)=(a,b)+(c,d)\\
D_n &\text{ if }m=1
\end{cases}\]

\begin{example} $\spl (2,3)= (1,2)+(1,1)$, so $$Q_{2,3}=\frac{1}{M}[Q_{1,1},Q_{1,2}]=\frac{1}{M}(D_2 D_1-D_1 D_2)$$
\end{example}
In particular, we can also use the same procedure to define \(Q_{km,kn}\) (i.e.\ for no comprime pairs.)  In this case there are $k$ closest lattice points, but we refer the reader to \cite{CompositionalRational} for a proof that any choice gives the same operator.

In the coprime case, the $(m,n)$ shuffle conjecture is, as stated above:
\[Q_{m,n}(-1)^n=\sum_{\PF\in \PF_{m,n}}t^{\area(PF)}q^{\dinv({PF})}F_{\ides({PF})}.\]
The coprime shuffle conjecture, however, does not naturally extend to the non-coprime case.   To state the extension, we need to expand the definition of $Q_{m,n}$ to say that $$\textstyle Q_{0,n}= \frac{qt}{qt-1} \underline{h}_n \left[ \frac{1-qt}{qt} X \right]\hspace{.2in}\left(\hbox{multiplication by $ \frac{qt}{qt-1} h_n \left[ \frac{1-qt}{qt} X \right]$}.\right)$$   Then the $Q_{0,n}$ form a multiplicative basis for the symmetric functions and we can express the operator ``multiplication by $e_k$'' ($\underline{e
}_k$) uniquely as:
\[\underline{e}_k=\sum_{\lambda\vdash k}c_\lambda(q,t)\prod_i Q_{0,\lambda_i}.\]
Then for $(m,n)$ coprime and $k>1$, define 
\[{e_{km,kn}}=\sum_{\lambda\vdash k}c_\lambda(q,t)\prod_i Q_{\lambda_i m,\lambda_i n}.\]
Finally, the fully general version of the $(m,n)$ shuffle conjecture is that:
\[e_{km,kn}\,(-1)^{k(n+1)}=\sum_{\PF\in \PF_{km,kn}}t^{\area(PF)}q^{\dinv({PF})}F_{\ides({PF})}\]
for suitable extensions of the statistics on the combinatorial side.

\subsection{The combinatorial side of the $(m,n)$ shuffle conjecture} \label{sec:intro}

Let \( m \) and \( n \) be positive. Extending the definitions of Hikita, Gorsky, and Mazin, parking functions and their statistics can be defined for the \( m \times n \) lattice as follows. (We refer to the objects defined here as \emph{rational} in contrast to the classical case.)

\begin{figure}[htbp]
\begin{center}
\input{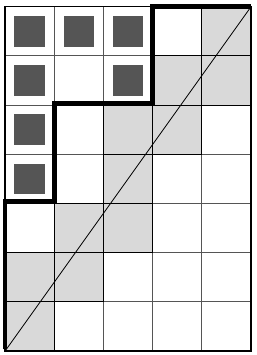_t}
\caption{A $(5,7)$-Dyck path}
\label{57Dyck}
\end{center}
\end{figure}

An \((m,n)\)-Dyck path is a path in the \( m \times n \) lattice which proceeds by north and east steps from \((0,0)\) to \((m,n)\) and which always remains weakly above the main diagonal \( y = \frac{n}{m} x \). For example, see Figure \ref{57Dyck}.

\begin{definition}[area]
The number of full cells between an \((m,n)\)-Dyck path \( \Pi \) and the main diagonal is denoted \( \area(\Pi) \). 
\end{definition}

\begin{example}
The $(5,7)$-Dyck path in Figure \ref{57Dyck} has \( \area \) 4. 
\end{example}
When there is no danger of confusion, we will omit the $(m,n)$ prefix and just refer to the Dyck path.  Similarly we will sometimes refer to parking functions rather than $(m,n)$-parking functions.

Notice that the collection of cells above a Dyck path \( \Pi \) forms an english Ferrers diagram \( \lambda(\Pi) \). For any cell \( c \in \lambda(\Pi) \), let \( leg(c) \) and \( arm(c) \) denote the number of cells in \( \lambda(\Pi) \) which are strictly south or strictly east of \( c \), respectively. 

\begin{definition}[pdinv]
The \( \dinv \) of a Dyck path \( \Pi \) is given by
\begin{equation} \label{pdinv}
\pdinv(\Pi) \,=\, \sum_{c \in \lambda(\Pi)} \chi \left( \frac{\arm(c)}{\leg(c)+1} \leq \frac{m}{n} < \frac{\arm(c)+1}{\leg(c)} \right).
\end{equation}
 If a parking function \( \PF \) is supported by the Dyck path \( \Pi \), we will write \( \pdinv(\Pi) = \pdinv(\PF) \) and refer to this as the ``\emph{path} \( \dinv \)'' of \( \PF \).
\end{definition}
Note that here the convention is that division by 0 gives infinity.  In this formula we do not need to evaluate $\frac{0}{0}$, but later we will use the convention that this is 0.
\begin{example} The Dyck path in Figure \ref{57Dyck} has \( \lambda = (3,3,1,1) \). The 7 cells which contribute to its \( \dinv \) are marked.
\end{example}

As in the classical case, an \((m,n)\)-parking function \( \PF \) is obtained by labeling the cells east of and adjacent to north steps of an \((m,n)\)-Dyck path with \(1,2,\dots,n \) in a column-increasing way. This underlying Dyck path will be denoted \( \Pi(\PF) \). As in the classical case, the cells directly east of north steps will be called ``parking spaces'' and we will fill them with ``cars.'' For brevity, let \( \area(\PF) = \area(\Pi(\PF))\).

\begin{figure}
\begin{center}
\input{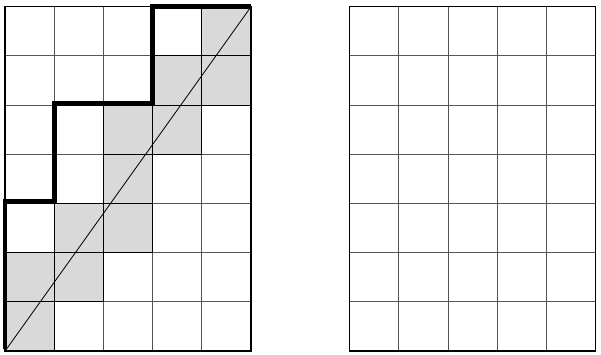_t}
\caption{A $(5,7)$-parking function and the ranks of its cars.}
\label{57Ranks}
\end{center}
\end{figure}

The definition of \( \dinv(\PF) \) and \( \sigma(\PF) \) will be more complex. In the classical case, these statistics are related to the idea of grouping cars by diagonals. Here, we extend this notion by using the function \( \rank'(x,y) = my-nx \). A point which is on the main diagonal has \( \rank' = 0 \). In fact, \( \rank' \) orders points according to their distance from the main diagonal. In the classical case, cars are ordered according to their distance from the main diagonal so that cars further to the right are larger within a single diagonal. Analogously, we want to ``break ties'' made by \( \rank' \) so that points further right are larger. To do this, we set \( \rank(x,y) = my-nx + \lfloor \frac{x \, \gcd(m,n)}{m} \rfloor \). We say that the \( \rank \) of a car is the \( \rank \) of the southwest corner of its cell. Figure \ref{57Ranks} contains a \((5,7)\)-parking function and the \( \rank \)s of its cars.

In the classical case, the word of the parking function \( \sigma \) is obtained by reading the cars from highest to lowest diagonal. Here, the word \( \sigma \) of an \( (m,n) \)-parking function is obtained reading the cars from highest to lowest value of \( \rank \). In Figure \ref{57Ranks}, we have \( \sigma = 7 5 6 3 4 1 2 \). As in the classical case, we will be interested in the descent set of the inverse of this permutation, \( \ides(\PF) \). In Figure \ref{57Ranks}, we have \( \ides(\PF) = \{2,4,6\} \).

Furthermore, in the classical case, pairs of cars create diagonal inversions, or \( \dinv \), when their diagonals are not too far apart (and the cars occur in a certain order). For an \((m,n)\)-parking function \( \PF \), we set
\begin{equation}
\tdinv(\PF) = \sum_{\hbox{cars }i<j} \chi \left( \rank(i) < \rank(j) < \rank(i)+m \right).
\end{equation}
This is called the \emph{temporary} \( \dinv \), or \( \tdinv \), because it needs to be modified to obtain \( \dinv \). In Figure \ref{57Ranks}, the pairs of cars contributing to \( \tdinv \) are \( (1,3) \), \( (1,4) \), \( (3,5) \), \( (3,6) \), \( (4,6) \), \( (5,7) \), and \( (6,7) \).

The final ingredient for \( \dinv \) is
\begin{equation}
\mdinv(\Pi) = \max \{ \tdinv(\PF) : \Pi(\PF)=\Pi \}.
\end{equation}
We will also write \( \mdinv(\PF) \) for \( \mdinv( \Pi( \PF) ) \).

Finally we can define our second statistic.
\begin{definition}[dinv]
\begin{equation}
\dinv(\PF) = \tdinv(\PF) + \pdinv(\PF) - \mdinv(\PF).
\end{equation}
\end{definition}
Note that \( \pdinv(\PF) \) and \( \mdinv(\PF) \) depend only on the underlying Dyck path \( \Pi(PF) \).
\begin{figure}
\begin{center}
\input{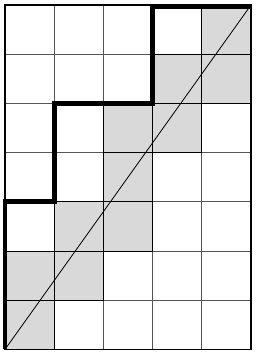_t}
\caption{A $(5,7)$-parking function with maximal $tdinv$.}
\label{57Max}
\end{center}
\end{figure}

We show in Lemma \ref{mdinvlemma} that \( \mdinv(\PF) \) is simply the \( \tdinv \) of the parking function with underlying Dyck path \( \Pi(\PF) \) whose word is the reverse permutation \( n \, \dots \, 2 \, 1 \). The maximizing parking function corresponding to Figure \ref{57Ranks} can be found in Figure \ref{57Max}. Note that the \( \tdinv \) of this parking function is 10. Combining this with our earlier observations, we have that the \( \dinv \) of the parking function shown in Figure \ref{57Ranks} is \( 7+7-10=4 \).

\section{Another view of \( \mdinv \)}\label{sec:mdinv}

For computational purposes, and for simplicity in the arguments to follow, it is important to note that \( \mdinv(\Pi) \) can be calculated directly, without computing \( \tdinv \) for every parking function supported by \( \Pi \). Recall that a cell \( c \) of a Dyck path \( \Pi \) is called a \emph{parking space} when it is directly east of a north step - that is, when any parking function supported by \( \Pi \) ``parks'' a car in cell \( c \).

\begin{lemma} \label{mdinvlemma}
For any \( (m,n) \)-Dyck path \( \Pi \), \( \mdinv(\Pi) = \tdinv(\PF) \) where \( \PF \) is the parking function supported by \( \Pi \) with \( \sigma(\PF) = n \, \dots \, 2 \, \, 1 \). In particular,
\begin{displaymath}
\mdinv(\Pi) = \sum_{c,c'} \chi(\rank(c) < \rank(c') < \rank(c)+m)
\end{displaymath}
where the sum is over parking spaces \( c,c' \) in \( \Pi \).
\end{lemma}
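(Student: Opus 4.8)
The plan is to show that among all parking functions supported by a fixed Dyck path $\Pi$, the one whose reading word is the decreasing permutation $n\,\cdots\,2\,1$ simultaneously maximizes each term in the sum defining $\tdinv$. The key observation is that $\tdinv(\PF) = \sum_{i<j}\chi(\rank(i)<\rank(j)<\rank(i)+m)$ is a sum over \emph{unordered pairs of parking spaces}: for any two distinct parking spaces $c,c'$ with $\rank(c)<\rank(c')$, exactly one of the pairs of labels placed in $\{c,c'\}$ (namely the one with the smaller car in $c$ and the larger in $c'$) has a chance to contribute, and it contributes precisely when $\rank(c')<\rank(c)+m$, a condition depending only on $\Pi$, not on the labeling. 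So for a given pair of parking spaces, a labeling contributes the pair if and only if the smaller car sits in the space of smaller rank. Hence $\tdinv(\PF)\le \sum_{c,c'}\chi(\rank(c)<\rank(c')<\rank(c)+m)$ for every $\PF$ supported by $\Pi$, with the sum over unordered pairs of parking spaces.

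First I would make precise the claim that the column-strict labeling with reading word $n\,\cdots\,2\,1$ exists and achieves equality. Reading cars from highest rank to lowest and assigning $n,n-1,\dots,1$ in that order means: the parking space of largest rank gets $n$, the next gets $n-1$, and so on; equivalently, car labels are assigned in increasing order of rank, so that $\rank(c)<\rank(c')\iff$ (car in $c$) $<$ (car in $c'$). I need to check this labeling is legal, i.e.\ column-strict: within a single column, a higher cell has strictly larger $\rank$ than a lower cell (since $\rank(x,y)=my-nx+\lfloor x\gcd(m,n)/m\rfloor$ is strictly increasing in $y$ for fixed $x$), so cars increase going up each column — exactly the column-strict condition. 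Moreover $\rank$ is injective on parking spaces (ties in $\rank'$ are broken, and cells in the same row of a connected Dyck-path diagram have distinct $x$, while the floor correction is constant within a $\gcd$-block — one should spell out that no two parking spaces share a rank), so the induced order is a genuine total order and the labeling by increasing rank is well-defined. With this labeling, every pair $c,c'$ with $\rank(c)<\rank(c')<\rank(c)+m$ contributes, so we attain the upper bound, proving $\mdinv(\Pi)=\tdinv(\PF)$ for this $\PF$ and giving the displayed formula.

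The main obstacle I anticipate is the bookkeeping around injectivity of $\rank$ on parking spaces and the claim that the contribution condition $\rank(c')<\rank(c)+m$ is labeling-independent: one must handle the non-coprime case carefully, where $\rank'$ genuinely ties and the floor term is doing the tie-breaking, and confirm that two parking spaces can never end up with equal $\rank$ (otherwise "the smaller car goes in the smaller-rank space" is ambiguous and the argument that at most one of the two labelings of a pair contributes could fail). Once injectivity is in hand, the rest is a clean "term-by-term maximization" argument: each summand of $\tdinv$ is bounded above by a quantity depending only on $\Pi$, and a single explicit labeling meets all these bounds at once.
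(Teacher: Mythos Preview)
Your proposal is correct and follows essentially the same approach as the paper: rewrite $\tdinv$ so that the condition $\rank(c)<\rank(c')<\rank(c)+m$ depends only on the path, observe this gives an upper bound attained when labels increase with rank, and identify that labeling with the one whose word is $n\cdots 2\,1$. You are in fact more careful than the paper in explicitly checking that this labeling is column-strict and in flagging the injectivity of $\rank$ on parking spaces; the paper's proof simply asserts the reindexing and the maximization without these verifications.
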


\begin{proof}
Note that for any \( (m,n) \)-parking function \( \PF \) with support \( \Pi \),
\begin{align*}
\tdinv(\PF) &\ses \sum_{i<j} \chi( \rank(i) < \rank(j) < \rank(i)+m) \cr
&\ses \sum_{\rank(i) < \rank(j) < \rank(i)+m} \chi(i < j). \cr
\end{align*}
Then setting \( \sigma(\PF) = n \, \dots \, 2 \,\, 1 \) ensures that whenever \( \rank(i)<\rank(j) \), \( i<j \). In particular, if \( \rank(i)<\rank(j)<\rank(i)+m \), then \( i<j \). Hence \( \mdinv(\Pi) \) is given by the parking function with this word and, consequently, by the above formula.
\end{proof}

Note also that the line which is parallel to the main diagonal (i.e. has slope \( \frac{n}{m} \) ) and which intersects a certain point \( (x,y) \) separates the lattice into the regions of points with higher and lower ranks than \( (x,y) \). Let \( SW(c) \) and \( NW(c) \) denote the lines with slope \( \frac{n}{m} \)  which pass through the southwest and northwest corners of a cell \( c \), respectively.

\begin{figure}
\begin{center}
\input{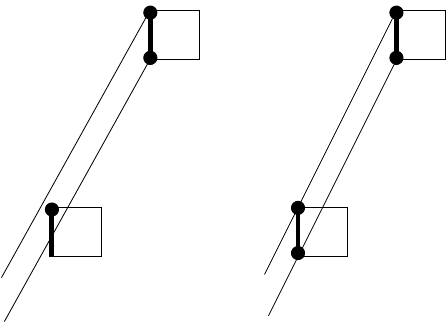_t}
\caption{Cells contributing to primary \( \mdinv \).}
\label{primdinv}
\end{center}
\end{figure}

Note also that \( \rank(x,y)+m = \rank(x,y+1) \) and that \( \rank(c) < \rank(c') < \rank(c)+m \) if and only if \( \rank(c') < \rank(c)+m < \rank(c')+m \). Therefore when \( c' \) is further right than \( c \), we can visualize ``\( \rank(c) < \rank(c') < \rank(c)+m \)'' as one of the arrangements in Figure \ref{primdinv}. That is, this statement holds when the northwest corner of \( c \) is between \( SW(c') \) and \( NW(c') \) or when \( SW(c) = SW(c') \). When \( c \) and \( c' \) are parking spaces, such an arrangement contributes to \( \mdinv \). We will refer to this as ``primary \( \mdinv \).''

\begin{figure}
\begin{center}
\input{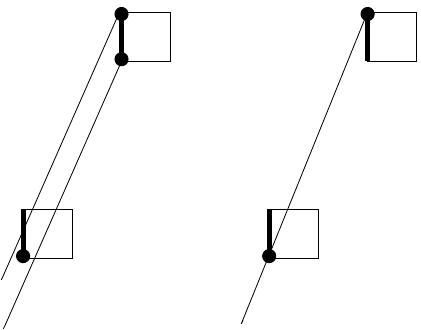_t}
\caption{Cells contributing to secondary \( \mdinv \).}
\label{secdinv}
\end{center}
\end{figure}

On the other hand, when \( c \) is further right than \( c' \), we can view ``\( \rank(c) < \rank(c') < \rank(c)+m \)'' as one of the arrangements in Figure \ref{secdinv}. That is, the southwest corner of \( c' \) is between \( SW(c) \) and \( NW(c) \) or \(NW(c) = SW(c') \). When \( c \) and \( c' \) are parking spaces, such an arrangement also contributes to \( \mdinv \). We will refer to this as ``secondary \( \mdinv \).''


\section{The path \( \dinv \) }

We can reformulate \( \pdinv \) in a manner that is analogous to Section \ref{sec:mdinv}. Then these two reformulations will provide an inclusion of arrangements counted by \( \mdinv \) and \( \pdinv \). The direction of this inclusion will depend on whether \( m > n \) or \( m < n \). (When $m=n$, we will see that \(\pdinv = \mdinv \) as expected.) Furthermore, we will be able to characterize the leftover, giving a new formula for \( \pdinv - \mdinv \) which modifies the \( \tdinv \) of a parking function.

\begin{figure}
\begin{center}
\input{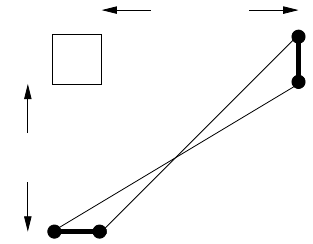_t}
\caption{A cell contributing to path \( \dinv \).}
\label{alratios}
\end{center}
\end{figure}
\begin{theorem}\label{Result1}The dinv of a rational Dyck path (and thus the path dinv of a parking function) is the number of times an east step intersects the rational diagonal of a north step.
\end{theorem}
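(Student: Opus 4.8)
The plan is to set up an explicit bijection between the cells of $\lambda(\Pi)$ that are counted by $\pdinv(\Pi)$ and the pairs $(N,E)$, with $N$ a north step and $E$ an east step of $\Pi$, such that $E$ meets the rational diagonal of $N$. To a cell $c\in\lambda(\Pi)$ with southwest corner $(p,q)$, $a=\arm(c)$ and $\ell=\leg(c)$, I would associate the north step $N(c)$ from $(p+a+1,q)$ to $(p+a+1,q+1)$ — the vertical portion of $\Pi$ one first meets on leaving $c$ to the east — and the east step $E(c)$ from $(p,q-\ell)$ to $(p+1,q-\ell)$ — the horizontal portion of $\Pi$ one first meets on leaving $c$ to the south. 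These are genuine steps of $\Pi$ because $\lambda(\Pi)$ is exactly the set of cells weakly above $\Pi$, so leaving $\lambda(\Pi)$ eastward (resp.\ southward) must cross a north (resp.\ east) step of $\Pi$ (when $c$ lies against the right edge of the $m\times n$ rectangle the relevant north step is part of $\Pi$'s final northward run up $x=m$). The assignment $c\mapsto(N(c),E(c))$ is injective, since $N(c)$ determines the row of $c$ and $E(c)$ its column, hence determines $c$.

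The main computation is to show that, for such a $c$, the step $E(c)$ meets the rational diagonal of $N(c)$ if and only if $\tfrac{a}{\ell+1}\le\tfrac{m}{n}<\tfrac{a+1}{\ell}$ (with division by $0$ giving $+\infty$), i.e.\ exactly when $c$ is counted by $\pdinv(\Pi)$. Writing $\rank'(x,y)=my-nx$, the rational diagonal of $N(c)$ is $\{(x,y): x<p+a+1,\ \rank'(p+a+1,q)<\rank'(x,y)\le\rank'(p+a+1,q+1)\}$, the top line (through the northwest corner of $N(c)$) being included and the bottom line excluded. Intersecting with the line $y=q-\ell$ carrying $E(c)$ yields the half-open $x$-interval
\[
[\, p+a+1-\tfrac{m(\ell+1)}{n}\ ,\ \ p+a+1-\tfrac{m\ell}{n}\,),
\]
while $E(c)$ occupies $x\in[p,p+1]$ there; the two intervals meet iff $p+a+1-\tfrac{m(\ell+1)}{n}\le p+1$ and $p+a+1-\tfrac{m\ell}{n}>p$, which rearrange to $\tfrac{a}{\ell+1}\le\tfrac{m}{n}$ and $\tfrac{m}{n}<\tfrac{a+1}{\ell}$ respectively. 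I would stress that the open/closed conventions line up exactly: the weak ``$\le$'' in the $\pdinv$ inequality matches the top line being included, and the strict ``$<$'' matches the bottom line being excluded, so the borderline cases ($a=0$, $\ell=0$, or equality in a ratio, and the endpoints of $E(c)$) come out right.

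To finish I would check surjectivity onto all pairs $(N,E)$ with $E$ meeting the rational diagonal of $N$. If $N$ runs from $(X,Y)$ to $(X,Y+1)$ and $E$ from $(P,R)$ to $(P+1,R)$, then reading the computation above backwards forces $P+1\le X$ and $R\le Y$, so $c:=[P,P+1]\times[Y,Y+1]$ is an honest cell of the rectangle. Since $\Pi$ is monotone, $E$ precedes $N$ along $\Pi$, and one then verifies directly that $c\in\lambda(\Pi)$, that $\leg(c)=Y-R$ (every cell of column $[P,P+1]$ strictly below $c$ and at height $\ge R$ lies in $\lambda(\Pi)$, since the unique east step of $\Pi$ in that column is $E$), and that $\arm(c)=X-P-1$ (every cell of row $[Y,Y+1]$ strictly east of $c$ and at column $<X$ lies in $\lambda(\Pi)$, while the cell in column $[X,X+1]$ does not, because $\Pi$ steps upward at $x=X$ from height $Y$). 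Hence $N(c)=N$ and $E(c)=E$, so $c\mapsto(N(c),E(c))$ restricts to a bijection from the cells counted by $\pdinv(\Pi)$ onto the incidences in the statement, giving $\pdinv(\Pi)$ as their number. The claim for a parking function is then immediate, since its path $\dinv$ is by definition the $\pdinv$ of its supporting Dyck path.

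The step I expect to be the real obstacle is this surjectivity check: arguing that ``$E$ meets the rational diagonal of $N$'' genuinely pins down a cell whose \emph{entire} arm and leg reach all the way to $N$ and $E$ — that is, that $\Pi$ cannot dip into the rectangle spanned by $c$, $N$, $E$ and shorten one of the statistics — together with tracking the half-open boundaries through the degenerate configurations. The ratio-to-geometry computation itself is routine linear algebra.
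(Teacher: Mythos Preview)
Your proof is correct and follows the same core idea as the paper: both set up the correspondence $c \leftrightarrow (N(c), E(c))$ between cells of $\lambda(\Pi)$ and (north step, east step) pairs, and show that the arm/leg inequality defining $\pdinv$ is equivalent to $E(c)$ meeting the rational diagonal of $N(c)$. The paper does this by reinterpreting the ratios $\frac{\leg+1}{\arm}$ and $\frac{\leg}{\arm+1}$ as slopes of the lines joining the endpoints of $N$ and $E$ and then enumerating the resulting pictures (Figure~\ref{aldinvtypes}); you do the same computation by writing down the $x$-interval of the rational diagonal at the height of $E$ and checking overlap with $[p,p+1]$ directly. These are two presentations of one argument.

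Your flagged obstacle --- that in the surjectivity direction $\Pi$ might ``dip'' and shorten an arm or leg --- is not a genuine issue, and your own parenthetical remarks already dispose of it: a monotone lattice path has exactly one east step in each column and exactly one north step in each row, so $E$ being the east step in column $[P,P+1]$ forces $\leg(c)=Y-R$, and $N$ being the north step in row $[Y,Y+1]$ forces $\arm(c)=X-P-1$. The paper does not spell out surjectivity at all (it is absorbed into the phrase ``these illustrations exactly give the intersections''), so your version is, if anything, more complete on this point.
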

\begin{proof}
Consider a cell \( c \) which contributes to the \( \pdinv \) of some Dyck path \( \Pi \). We have
\begin{displaymath}
\frac{\leg(c) + 1}{\arm(c)} \geq \frac{n}{m} > \frac{\leg(c)}{\arm(c)+1}.
\end{displaymath}
The middle value is the slope of the main diagonal of \( \Pi \). We can also view the other values as slopes. Let \( N \) and \(E \) be the north and east steps of \( \Pi \) directly east and south of \( c \), respectively. Then the line connecting the north end of \( N \) and the east end of \( E \) has slope \( \frac{\leg(c)+1}{\arm(c)} \). Similarly, the line connecting the south end of \( N \) and the west end of \( E \) has slope \( \frac{\leg(c)}{\arm(c)+1} \). For example, see Figure \ref{alratios}. Then looking at the lines of slope \( \frac{n}{m} \) which intersect either end of \( N \), we have several possible configurations, all of which are enumerated in Figure \ref{aldinvtypes}.  In particular, notice these illustrations exactly give the intersections of east steps with the rational diagonal of a north steps.
\end{proof}
\begin{figure}
\begin{center}
\input{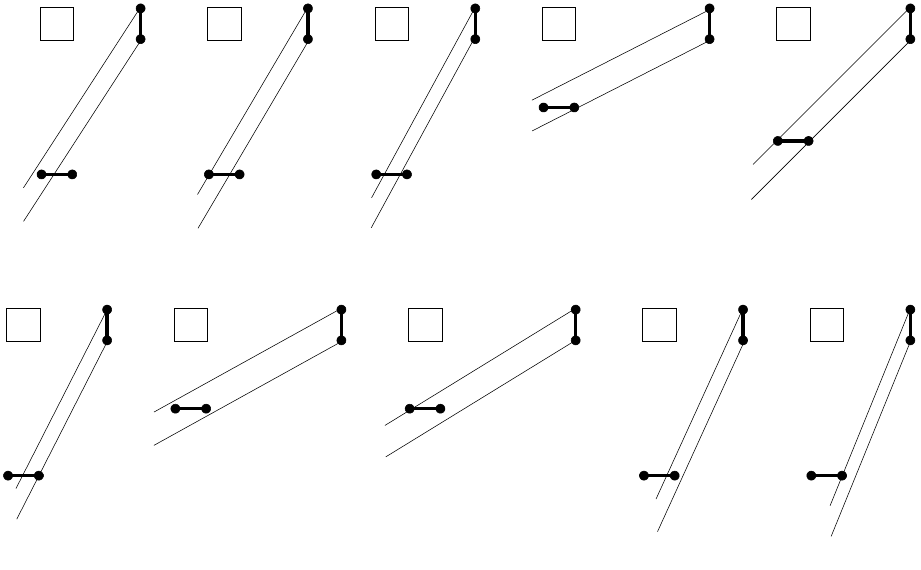_t}
\caption{An illustration of the different types of \( \pdinv \).}
\label{aldinvtypes}
\end{center}
\end{figure}

\section{The \( \dinv \) correction}\label{sec:correction}
Next we'd like to give a simpler interpretation of the dinv correction by simultaneously analyzing path \(\pdinv\) and \(\mdinv\).  (Recall that the dinv correction is \(\pdinv-\mdinv\).) We will show that most of the configurations in Figure \ref{aldinvtypes} contribute to primary \( \mdinv \), secondary \( \mdinv \) or both. The basic idea is that if an east step crosses some line \( L \) with slope \( \frac{n}{m} \) in a Dyck path, then some north step must cross \( L \) earlier in the path. Then the parking spaces near the two north steps arising in each arrangement will contribute to \( \mdinv \). In particular, we have the following two results.

\begin{lemma} \label{primary}
Let $\Pi$ be any $(m,n)$-Dyck path. The pairs of cells which contribute to primary $\mdinv(\Pi)$ are in bijection with cells contributing to $\pdinv(\Pi)$ of Types A, B, C, D, E, and F.
\end{lemma}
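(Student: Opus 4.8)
The plan is to construct an explicit bijection, after first rephrasing both sides in terms of the steps of $\Pi$. On the $\pdinv$ side, Theorem~\ref{Result1} lets us encode a cell $c$ contributing to $\pdinv(\Pi)$ by its arm-end north step $N=N(c)$ and its leg-end east step $E=E(c)$, subject to $E$ meeting the rational diagonal of $N$; reading off Figure~\ref{aldinvtypes}, Types A--F are exactly the configurations in which $E$ crosses the lower bounding line $SW(N)$ of that diagonal (it may in addition cross the upper line $NW(N)$, which is the ``negative correction'' shape and requires $n>m$). On the $\mdinv$ side, Lemma~\ref{mdinvlemma} together with the geometry of Section~\ref{sec:mdinv} says that a pair of parking spaces $(d,d')$ with $d'$ to the right contributes to primary $\mdinv(\Pi)$ precisely when, writing $N_1$ and $N_2$ for the north steps on the west edges of $d$ and $d'$ (so $N_1$ precedes $N_2$ in $\Pi$), the north end of $N_1$ lies in the rational diagonal of $N_2$ --- recalling that this region includes its top line $NW(N_2)$ but not its bottom line $SW(N_2)$. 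So the claim reduces to exhibiting a bijection between the Type A--F cells of $\Pi$ and the ordered pairs $(N_1,N_2)$ of north steps of $\Pi$ with $N_1$ preceding $N_2$ and the north end of $N_1$ in the rational diagonal of $N_2$.

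The bridge between the two is the elementary observation that if an east step $E$ of $\Pi$ crosses a line $L$ of slope $\frac{n}{m}$, then $L$ lies strictly above the main diagonal and some north step of $\Pi$ that strictly precedes $E$ also crosses $L$. I would prove this by a rank count: $\Pi$ starts at $(0,0)$, which has $\rank(0,0)=0$ and so lies weakly below $L$; each east step strictly decreases $\rank$ while each north step raises it by exactly $m$; since $E$ carries the path from a point above $L$ to a point below $L$, running backwards from the start of $E$ we must meet a step that carries the path from below $L$ to above $L$, and that step is north. The same count does the substantive work: if $N_1$ is any north step crossing $SW(N_2)$, then the south end of $N_1$ has rank less than that of $SW(N_2)$, so the north end of $N_1$ has rank strictly between that value and $\rank(SW(N_2))+m=\rank(NW(N_2))$, i.e.\ the north end of $N_1$ lies in the rational diagonal of $N_2$; and since the $x$-coordinates of the north steps of $\Pi$ are weakly increasing along the path, $N_1$ in fact lies strictly to the left of $N_2$.

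The two maps are then as follows. A Type A--F cell $c$, with $N:=N(c)$ and $E:=E(c)$, is sent to the pair $(N_1,N)$, where $N_1$ is the \emph{last} north step preceding $E$ that crosses $SW(N)$; this step exists by the bridge, and by the previous paragraph $(N_1,N)$ is a valid primary-$\mdinv$ pair. Conversely, a pair $(N_1,N_2)$ as above is sent to the cell $c$ lying in the same row as $N_2$ and the same column as $E^*$, where $E^*$ is the \emph{first} east step after $N_1$ that crosses $SW(N_2)$; such an $E^*$ exists and precedes $N_2$ because the path lies above $SW(N_2)$ just after $N_1$ but meets $SW(N_2)$ at the south end of $N_2$. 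One then checks: $N(c)=N_2$ and $E(c)=E^*$, using that each column of $\Pi$ contains exactly one east step and each row exactly one north step; that ``$E^*$ crosses $SW(N_2)$'' translates into the two slope conditions from the proof of Theorem~\ref{Result1} that make $c$ a $\pdinv$-cell, hence a cell of Type A--F; and that the two constructions are mutually inverse, the ``last before'' and ``first after'' prescriptions matching because between $N_1$ and $E^*$ the path never drops below $SW(N_2)$, so no intervening step of either kind can cross $SW(N_2)$.

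The main obstacle I anticipate is the boundary bookkeeping, which is where the split into the individual Types A--F genuinely resides. One must verify, type by type, that the displayed configuration truly forces $E$ to cross $SW(N)$, rather than to lie entirely inside the strip between $SW(N)$ and $NW(N)$ (the ``positive correction'' shape, which only occurs for $n<m$ and must land in \emph{none} of A--F) or to touch $SW(N)$ only at a lattice endpoint; dually, one must check that every primary-$\mdinv$ pair is hit exactly once, including the degenerate pairs where the lines $SW(d)$ and $SW(d')$ coincide yet $\rank(d)<\rank(d')$ thanks to the correction term $\lfloor x\gcd(m,n)/m\rfloor$ (which only matters when $m$ and $n$ are not coprime). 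Keeping the ``crosses'' convention in step with the ``includes $NW$, excludes $SW$'' convention for rational diagonals, and confirming that the argument is uniform across $m<n$, $m=n$, and $m>n$, is the delicate part; the rank arithmetic itself is routine.
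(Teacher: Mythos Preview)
Your proposal is correct and takes essentially the same approach as the paper: both construct the bijection by matching a Type A--F cell to a primary $\mdinv$ pair via the line $SW(N)$, taking the last north step before $E$ that crosses it in one direction and the first east step after $N_1$ that crosses it in the other. Your ``bridge'' observation is exactly the paper's informal remark that an east step crossing a line of slope $n/m$ forces an earlier north step to cross it, and your anticipated boundary bookkeeping is precisely what the paper handles by grouping Types A--C and D--F and drawing out the four resulting configurations; the only cosmetic difference is that the paper labels the parking spaces $d,d'$ with $d$ on the right, inverting your convention.
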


\begin{figure}
\begin{center}
\input{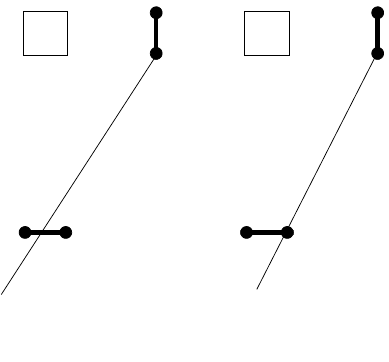_t}
\caption{The types of \( \pdinv \) contributing to primary \( \mdinv \).}
\label{ABCDEF}
\end{center}
\end{figure}

\begin{proof}

We will group Types A, B and C and Types D, E and F as in Figure \ref{ABCDEF}. As discussed above, in each of these configurations there must be some north step crossing the sloped line which is earlier in the path than the indicated east step. Take the closest such north step. It may intersect the sloped line in its interior or on its southern endpoint. This gives four arrangements as in Figure \ref{primpdinv}. In each of these, parking spaces \( d \) and \( d' \) are in one of the arrangements of Figure \ref{primdinv}. Hence they contribute to primary \( \mdinv \).
 
\begin{figure}
\begin{center}
\input{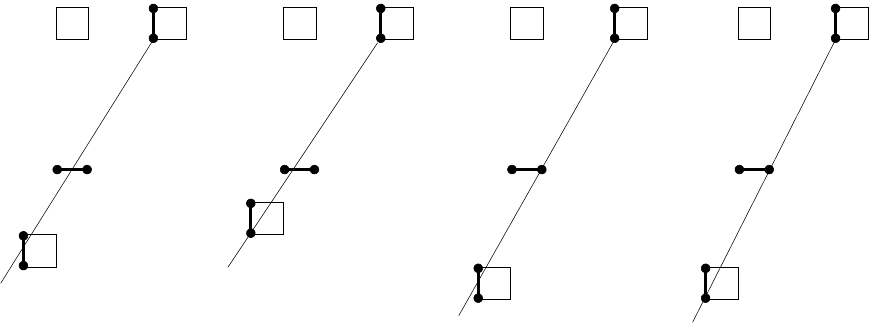_t}
\caption{Four possible configurations corresponding to Types A-F.}
\label{primpdinv}
\end{center}
\end{figure}

Conversely, suppose parking spaces \( d \) and \( d' \) are in one of the arrangements of Figure \ref{primpdinv}. Then there is at least one east step which crosses \( SW(d) \) between the north steps of \(d \) and \(d'\). This east step may either intersect \( SW(d) \) in its interior or on its east endpoint. Choose the east step of this form which is closest to \( d' \). It is not difficult to check that the cell \(c\) north of this east step and west of \( d \) will be of Type A, B, C, D, E or F.

\end{proof}

\begin{lemma} \label{secondary}
Let $\Pi$ be any $(m,n)$-Dyck path. The pairs of cells which contribute secondary dinv to $\mdinv(\Pi)$ are in bijection with cells contributing to $\pdinv(\Pi)$ of Types C, F, I and J.
\end{lemma}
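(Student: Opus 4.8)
The plan is to mirror the proof of Lemma~\ref{primary} almost line for line. First I would partition the four configurations --- Types C, F, I and J of Figure~\ref{aldinvtypes} --- into two families in the style of the ``Type ABC''/``Type DEF'' grouping of Figure~\ref{ABCDEF}, the split being governed by which of the two lines of slope $\frac{n}{m}$ through the endpoints of the north step $N$ lying directly east of the $\pdinv$-cell $c$ is crossed by the east step $E$ lying directly south of $c$, and by whether an endpoint of $E$ lies exactly on that line. Lemma~\ref{primary} used the line through the \emph{south} endpoint of $N$ and landed in the primary geometry of Figure~\ref{primdinv}; the secondary case uses the line through the \emph{north} endpoint of $N$ and lands in the secondary geometry of Figure~\ref{secdinv}. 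Types C and F are the borderline configurations, which accounts for their appearing on both lists.

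For the forward direction, fix such a $\pdinv$-cell $c$ and let $L$ be the relevant slope-$\frac{n}{m}$ line, so that $E$ crosses $L$. Because the path starts and ends on the main diagonal --- which lies at or below $L$ --- and because an east step can only pass from the upper side of a slope-$\frac{n}{m}$ line to its lower side, some north step of $\Pi$ must cross $L$ earlier in the path; take $N'$, the closest one. Exactly as in Figure~\ref{primpdinv}, $N'$ meets $L$ either in its interior or at its south endpoint, leaving a short list of arrangements, and in each of them one checks that the parking space $d$ east of $N$ and the parking space $d'$ east of $N'$ realize one of the two arrangements of Figure~\ref{secdinv}, so that $\{d,d'\}$ contributes to secondary $\mdinv(\Pi)$. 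For the converse, begin with a pair of parking spaces realizing a secondary $\mdinv$ arrangement; between their two north steps there must be an east step crossing the pertinent slope-$\frac{n}{m}$ line (possibly meeting it at its east endpoint), and choosing the one of these closest to the appropriate cell and taking $c$ to be the cell just north of it and just west of the relevant column, one verifies that $c$ has Type C, F, I or J and that this recovers the cell the forward map started from.

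The geometric engine --- ``an east step that crosses a slope-$\frac{n}{m}$ line forces an earlier north step across that line'' --- is identical to the one already used in Lemma~\ref{primary} and in the opening of Section~\ref{sec:correction}, so the substance of the proof is the case bookkeeping rather than any new idea. The main obstacle is (i) checking, in both directions and for each sub-arrangement, that one really ends up in the \emph{secondary} arrangement of Figure~\ref{secdinv} with the rank inequalities oriented the right way, and that ``take the closest north step'' and ``take the closest east step'' are mutually inverse; and (ii) handling the overlap with Lemma~\ref{primary}: for a $\pdinv$-cell of Type C or F one must confirm that the secondary $\mdinv$ pair it yields here is genuinely different from the primary pair it yields there (so that Types C and F are correctly counted with multiplicity two in $\mdinv = (\text{primary}) + (\text{secondary})$), or, depending on whether $m<n$, $m=n$, or $m>n$, that the types that would otherwise be overcounted are vacuous. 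It is this boundary accounting, not the line-crossing argument, that requires the care.
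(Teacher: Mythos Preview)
Your proposal is correct and is exactly what the paper does: the paper's proof is the single sentence ``The proof of Lemma~\ref{secondary} is completely analogous to that of Lemma~\ref{primary},'' and your plan to replace the line $SW(d)$ through the south endpoint of $N$ by the line $NW(d)$ through its north endpoint is precisely the intended analogy. Your concern~(ii) is extraneous to this lemma: primary and secondary $\mdinv$ pairs are disjoint by definition (the lower-rank parking space lies to the left in the primary picture and to the right in the secondary one), so a Type~C or~F cell automatically produces two distinct pairs under the two bijections, and the resulting double-counting is what the paper exploits in the paragraph \emph{after} the lemma rather than something to be resolved inside its proof.
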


The proof of Lemma \ref{secondary} is completely analogous to that of Lemma \ref{primary}. Combining these observations, we see that \( \mdinv \) double counts \( \pdinv \) arrangements of Types C and F and does not count Types G and H. Notice, though, that Types C and F correspond simply to those cells \( c \in \lambda(\Pi) \) with
\begin{displaymath}
\frac{\leg(c)}{\arm(c)} \geq \frac{n}{m} > \frac{\leg(c)+1}{\arm(c)+1}
\end{displaymath}
or equivalently
\begin{displaymath}
\frac{\arm(c)}{\leg(c)} \leq \frac{m}{n} < \frac{\arm(c)+1}{\leg(c)+1}.
\end{displaymath}
Similarly, Types G and H correspond to those cells \( c \in \lambda(\Pi) \) with
\begin{displaymath}
\frac{\arm(c)}{\leg(c)} > \frac{m}{n} \geq \frac{\arm(c)+1}{\leg(c)+1}.
\end{displaymath}
Since these quantities represent inverses of slopes, we must allow division by 0 and let \( \frac{x}{0} = \infty \) for any \( x \neq 0 \). Examining the cases where \( \frac{0}{0} \) occurs, we can see that we need to set this quantity equal to \( 0 \).

Combining these lemmas with Theorem \ref{Result1}
 \begin{theorem}\label{Result2}
For any \( (m,n) \)-parking function \( \PF \) with underlying Dyck path \( \Pi \), we have
\begin{align*}
\dinv(\PF) &= \tdinv(\PF) + \sum_{c \in \lambda(\Pi)} \chi \left( \frac{\arm(c)}{\leg(c)} > \frac{m}{n} \geq \frac{\arm(c)+1}{\leg(c)+1} \right) \cr
&\hskip 24pt - \sum_{c \in \lambda(\Pi)}  \chi \left( \frac{\arm(c)}{\leg(c)} \leq \frac{m}{n} < \frac{\arm(c)+1}{\leg(c)+1} \right).
\end{align*}
Here \( \frac{0}{0} = 0 \) and \( \frac{x}{0} = \infty \) for all \( x\neq 0 \).
\end{theorem}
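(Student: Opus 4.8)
The plan is to assemble Theorem \ref{Result2} directly from the three ingredients already established, unwinding the definition $\dinv(\PF) = \tdinv(\PF) + \pdinv(\PF) - \mdinv(\PF)$. I would start from Theorem \ref{Result1}, which identifies $\pdinv(\Pi)$ with the number of cells of $\lambda(\Pi)$ falling into one of the ten Types A--J of Figure \ref{aldinvtypes}. Next I would invoke Lemma \ref{primary} and Lemma \ref{secondary} to express $\mdinv(\Pi)$ as a sum of two counts: the number of cells of Types A, B, C, D, E, F (primary $\mdinv$) plus the number of cells of Types C, F, I, J (secondary $\mdinv$). The arithmetic is then the crux: writing $T_X$ for the number of cells of Type $X$, we have $\pdinv = \sum_X T_X$ and $\mdinv = (T_A+T_B+T_C+T_D+T_E+T_F) + (T_C+T_F+T_I+T_J)$, so that
\[
\pdinv - \mdinv = (T_G + T_H) - (T_C + T_F).
\]
This is the key cancellation; the remaining Types A, B, D, E, I, J drop out entirely.

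Having reduced to $\pdinv - \mdinv = (T_G+T_H) - (T_C+T_F)$, the second step is to translate the two surviving type-counts into the arm/leg inequalities in the statement. Here I would rely on the identifications already spelled out in the paragraph following Lemma \ref{secondary}: Types C and F are exactly the cells $c \in \lambda(\Pi)$ with $\frac{\arm(c)}{\leg(c)} \leq \frac{m}{n} < \frac{\arm(c)+1}{\leg(c)+1}$, while Types G and H are exactly the cells with $\frac{\arm(c)}{\leg(c)} > \frac{m}{n} \geq \frac{\arm(c)+1}{\leg(c)+1}$. I would re-derive these two equivalences carefully from the configurations in Figure \ref{aldinvtypes}, checking that the slopes of the two auxiliary lines through the endpoints of the relevant north step (namely $\frac{\leg(c)+1}{\arm(c)}$ and $\frac{\leg(c)}{\arm(c)+1}$, as computed in the proof of Theorem \ref{Result1}) straddle $\frac{n}{m}$ in the way dictated by each type, and tracking which boundary cases are inclusive. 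Combining with $\pdinv - \mdinv = (T_G+T_H)-(T_C+T_F)$ and adding $\tdinv(\PF)$ gives the displayed formula.

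Finally I would dispatch the degenerate-fraction conventions: since $\arm(c)$ and $\leg(c)$ can vanish, some of these ratios are $\frac{x}{0}$ or $\frac{0}{0}$, and I would confirm that declaring $\frac{x}{0} = \infty$ for $x \neq 0$ and $\frac{0}{0} = 0$ makes the characterizations of Types C, F, G, H correct in exactly the boundary cells (a cell with $\leg(c) = 0$ sits in the bottom row; a cell with $\arm(c) = \leg(c) = 0$ is the single corner cell), as already noted in the surrounding text.

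The main obstacle I anticipate is not the bookkeeping of the type counts — that is a short inclusion-exclusion once Lemmas \ref{primary} and \ref{secondary} are in hand — but rather verifying the slope equivalences for Types C, F, G, H rigorously from the figures, including getting every inequality strict-versus-weak correct and confirming that Types A, B, D, E, I, J really are the ones that cancel. In other words, the delicate part is making the passage from the pictorial classification of Figure \ref{aldinvtypes} to the algebraic inequalities airtight, since the figures encode several near-degenerate configurations that differ only in which endpoint a sloped line passes through.
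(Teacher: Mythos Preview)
Your proposal is correct and follows essentially the same route as the paper: the paper's proof of Theorem \ref{Result2} is literally the phrase ``Combining these lemmas with Theorem \ref{Result1},'' and the content is exactly the inclusion--exclusion $\pdinv-\mdinv=(T_G+T_H)-(T_C+T_F)$ together with the arm/leg identifications of Types C, F and G, H, all of which appear in the paragraphs immediately preceding the theorem. Your write-up makes the bookkeeping more explicit than the paper does, but there is no difference in strategy.
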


Furthermore, examining Figure \ref{aldinvtypes}, we can see that Types C and F can only occur when \( n > m \) while Types G and H can only occur when \( n < m \). Hence we have
\begin{cor} \label{MainCor}
Let \( \PF \) be any \( (m,n) \)-parking function with underlying Dyck path \( \Pi \). Set \( \frac{0}{0} = 0 \) and \( \frac{x}{0} = \infty \) for all \( x\neq 0 \). If \( n > m \) then
\begin{displaymath}
\dinv(\PF) = \tdinv(\PF) - \sum_{c \in \lambda(\Pi)}  \chi \left( \frac{\arm(c)}{\leg(c)} \leq \frac{m}{n} < \frac{\arm(c)+1}{\leg(c)+1} \right).
\end{displaymath}
If \( n = m \) then
\begin{displaymath}
\dinv(\PF) = \tdinv(\PF).
\end{displaymath}
Finally, if \( n < m \) then
\begin{displaymath}
\dinv(\PF) = \tdinv(\PF) + \sum_{c \in \lambda(\Pi)} \chi \left( \frac{\arm(c)}{\leg(c)} > \frac{m}{n} \geq \frac{\arm(c)+1}{\leg(c)+1} \right).
\end{displaymath}
\end{cor}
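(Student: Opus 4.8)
The plan is to derive Corollary~\ref{MainCor} straight from Theorem~\ref{Result2} by showing that, according to whether $m<n$, $m=n$, or $m>n$, one (or both) of the two correction sums appearing there is indexed by the empty set of cells. Precisely, I would first prove the two elementary implications, valid for every cell $c\in\lambda(\Pi)$ under the conventions $\tfrac00=0$ and $\tfrac x0=\infty$ for $x\neq0$,
\begin{align*}
\frac{\arm(c)}{\leg(c)} \leq \frac{m}{n} < \frac{\arm(c)+1}{\leg(c)+1} &\ \Longrightarrow\ n>m,\\
\frac{\arm(c)}{\leg(c)} > \frac{m}{n} \geq \frac{\arm(c)+1}{\leg(c)+1} &\ \Longrightarrow\ n<m.
\end{align*}
Granting these, Corollary~\ref{MainCor} follows at once from Theorem~\ref{Result2}: when $n>m$ the $+$ sum there, being indexed by the cells satisfying the second implication's hypothesis, is empty; when $n<m$ the $-$ sum, indexed by the cells of the first hypothesis, is empty; and when $n=m$ both sums are empty, leaving $\dinv(\PF)=\tdinv(\PF)$.

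To prove the first implication, abbreviate $a=\arm(c)\geq0$ and $\ell=\leg(c)\geq0$ and split on whether $\ell=0$. If $\ell\geq1$, every denominator occurring is positive, so clearing denominators turns the hypothesis into $an\leq m\ell$ and $m(\ell+1)<n(a+1)$; feeding $m\ell\geq an$ into the second inequality gives $n(a+1)>m\ell+m\geq an+m$, hence $n>m$. If $\ell=0$, the convention $\tfrac x0=\infty$ forces $a=0$ as well, since otherwise the left-hand inequality would read $\infty\leq\tfrac mn$; the chain then collapses to $0\leq\tfrac mn<1$, which again says $n>m$ because $m,n$ are positive integers. The second implication is handled the same way: if $\ell=0$ one checks $a\geq1$ is forced (else $0=\tfrac a\ell>\tfrac mn$ is impossible with $m,n\geq1$), and then $\tfrac mn\geq a+1\geq2$ yields $m>n$; if $\ell\geq1$ then $a\geq1$ is again forced, clearing denominators gives $an>m\ell$ and $m(\ell+1)\geq n(a+1)$, and substituting $m\ell<an$ into the latter produces $n(a+1)\leq m\ell+m<an+m$, hence $n<m$.

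The only place any care is needed is the bookkeeping around the division conventions in the boundary cases $\leg(c)=0$ (and the exclusion of $\arm(c)=0$ in the second implication); the rest is a single cross-multiplication in each case, so I do not anticipate a genuine obstacle. For completeness I would also record the pictorial reading alluded to in the statement: in Figure~\ref{aldinvtypes} the configurations of Types C and F are exactly those in which the two endpoints of the east step lie on opposite sides of the rational diagonal of the neighbouring north step — which, comparing the rank change $-n$ across an east step with the rank gain $+m$ up a north step, can occur only when $n>m$ — whereas Types G and H are those in which the entire east step lies strictly inside that rational diagonal, forcing $n<m$. Either route closes the argument; the algebraic one has the advantage of making the roles of $\tfrac00$ and $\tfrac x0$ completely explicit.
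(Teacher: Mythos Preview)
Your argument is correct. The two implications are verified cleanly, the boundary cases with $\leg(c)=0$ are handled properly under the stated conventions, and the deduction of each case of the corollary from Theorem~\ref{Result2} is immediate.

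The route differs in presentation from the paper's. The paper simply observes, by inspecting the configurations in Figure~\ref{aldinvtypes}, that Types~C and~F (the cells indexed by the $-$~sum) can only arise when $n>m$ and that Types~G and~H (the $+$~sum) can only arise when $n<m$; the corollary then drops out. You instead prove the same two exclusions by a direct cross-multiplication on the arm/leg inequalities, with a case split on $\leg(c)=0$. The content is the same---your final paragraph even sketches the paper's pictorial reading---but your algebraic version has the virtue of making the role of the conventions $\tfrac{0}{0}=0$ and $\tfrac{x}{0}=\infty$ explicit and of not depending on correctly parsing a ten-case figure. The paper's version is terser and ties the corollary back to the geometric taxonomy already set up in Section~\ref{sec:correction}.
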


\begin{theorem}\label{Result3}
For any \( (m,n) \)-parking function \(\PF\), \( \dinv(\PF) \geq 0\).
\end{theorem}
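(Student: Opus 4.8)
The plan is to peel off the easy cases with Corollary~\ref{MainCor} and then reduce everything to a purely combinatorial lower bound on $\tdinv$. If $n=m$ then $\dinv(\PF)=\tdinv(\PF)\ge 0$, and if $n<m$ then $\dinv(\PF)=\tdinv(\PF)+\sum_{c\in\lambda(\Pi)}\chi\!\left(\frac{\arm(c)}{\leg(c)}>\frac mn\ge\frac{\arm(c)+1}{\leg(c)+1}\right)\ge\tdinv(\PF)\ge 0$, so in both cases we are done. For $n>m$, Corollary~\ref{MainCor} gives $\dinv(\PF)=\tdinv(\PF)-D(\Pi)$, where $D(\Pi)=\sum_{c\in\lambda(\Pi)}\chi\!\left(\frac{\arm(c)}{\leg(c)}\le\frac mn<\frac{\arm(c)+1}{\leg(c)+1}\right)$ counts the Type~C/F cells and depends only on the path; so it suffices to prove $\tdinv(\PF)\ge D(\Pi)$ for every parking function $\PF$ supported by $\Pi$.

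Since $\tdinv$ depends on the labeling, I need a bound that survives every column-increasing filling, and I would package this graph-theoretically. Build a directed graph $G$ on the parking spaces of $\Pi$ with two kinds of arcs: a \emph{column arc} $c\to c^+$ whenever $c^+$ sits directly above $c$ in its column, and a \emph{rank arc} $c'\to c$ whenever $\{c,c'\}$ is an attacking pair with $\rank(c)<\rank(c')$ (these always join \emph{different} columns, since two parking spaces of a single column have ranks differing by a positive multiple of $m$). A parking function supported by $\Pi$ is precisely a bijective labeling of the parking spaces by $1,\dots,n$ that increases along every column arc. Now if a directed cycle of $G$ had the property that none of its rank arcs $c'\to c$ were a genuine $\tdinv$ pair, then $\rank(c)<\rank(c')$ would force the label of $c$ to exceed that of $c'$ on each such arc, so the label would strictly increase along \emph{every} arc of the cycle --- impossible. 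Hence each directed cycle of $G$ forces at least one $\tdinv$ pair, and therefore $\tdinv(\PF)$ is bounded below by the maximum number of directed cycles of $G$ that are pairwise disjoint in their rank arcs, for every $\PF$.

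So the remaining task is to exhibit $D(\Pi)$ directed cycles of $G$ that are rank-arc-disjoint, one assigned to each Type~C/F cell. For this I would reuse the geometry behind Lemmas~\ref{primary} and \ref{secondary}: if $c$ is a Type~C/F cell, with $N$ the north step east of $c$ and $E$ the east step south of $c$, then --- as the Type~C/F inequality says precisely --- the west endpoint of $E$ lies strictly above the line $NW(N)$ and its east endpoint lies weakly below $SW(N)$, so $E$ crosses \emph{both} slope-$\frac nm$ lines through the endpoints of $N$; taking the closest preceding north steps $N'$ and $N''$ that cross $SW(N)$ and $NW(N)$ and stringing together the parking spaces attached to $N,N',N''$ together with the column arcs between them should close up into a directed cycle of $G$. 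The main obstacle --- and essentially all of the work --- will be verifying that this closure really happens in each of the configurations of Figure~\ref{aldinvtypes}, and that the rank arcs contributed by distinct Type~C/F cells are genuinely distinct; this is the same "which north step is closest'' bookkeeping that drives Lemmas~\ref{primary} and \ref{secondary}, carried out one level deeper, and I expect it to branch according to whether each relevant step meets a sloped line in its interior or at an endpoint. As a sanity check, when $m=n-1$ the Type~C/F cells are exactly the east-then-north corners of the path (Theorem~\ref{Result4}), and one can verify by hand that the $\tdinv$-minimizing filling achieves $\tdinv=D(\Pi)$, so the inequality is sharp.
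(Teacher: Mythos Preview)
Your overall plan coincides with the paper's: the $m\ge n$ cases fall out of Corollary~\ref{MainCor}, and for $m<n$ one attaches to each Type~C/F cell a small configuration of parking spaces that forces a diagonal inversion regardless of the labeling, then checks that distinct cells yield distinct configurations.  Your directed-graph language is a pleasant repackaging of exactly this idea --- a cycle with one column arc and two rank arcs is precisely the paper's triple $(j,k,i)$.

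Where your sketch diverges, and where it runs into trouble, is in the construction of $N''$.  You take $N'$ and $N''$ to be the closest preceding north steps crossing $SW(N)$ and $NW(N)$ \emph{independently}; but then $N'$ and $N''$ need not lie in the same column, there are no column arcs among the three parking spaces, and it is not clear how ``stringing together \dots\ with the column arcs between them'' produces a directed cycle at all.  The paper's move is simpler and sidesteps this: take $N'$ to be the closest north step crossing $SW(N)$, and then argue (this is the content of Figures~\ref{Nclosest} and~\ref{closeE}) that $N'$ cannot be followed by any east step, so it must be followed by another north step $N''$ directly above it.  Now the parking spaces of $N'$ and $N''$ are in the same column by construction, the column arc $k\to i$ is automatic, and the $3$-cycle $k\to i\to j\to k$ closes up immediately.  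With this choice the injectivity of $c\mapsto(i,j,k)$ is a one-paragraph ``closest east step to $N'$'' argument rather than the deeper case analysis you anticipate --- the bookkeeping is used once, not twice, and it does not branch on interior-versus-endpoint intersections.
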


\begin{proof}
This follows from Corollary \ref{MainCor} when \( m \geq n\). Let \( m < n \) and let \( PF \) be any \( (m,n) \)-parking function. We will show that for every instance of a cell \(c \in \lambda(\Pi(PF)))\) with \( \frac{\arm(c)}{\leg(c)} \leq \frac{m}{n} < \frac{\arm(c)+1}{\leg(c)+1} \), there is a triple of cars in \(\PF\) such that two of these cars must contribute to \( \tdinv(\PF) \).

Given such a cell \(c\), let \(N\) be the north step directly east of \(c\) and let \(E\) be the east step directly south of \(c\). Recall that since  \( \frac{\arm(c)}{\leg(c)} \leq \frac{m}{n} < \frac{\arm(c)+1}{\leg(c)+1} \), the lines corresponding to the ends of \(N\) must pass through \(E\) (including its east end).

Consider the closest north step \(N'\) which crosses the line corresponding to the south end of \(N\). Since \(N'\) is the closest north step, it cannot be followed by an east step other than \(E\) (see Figure \ref{Nclosest}). Therefore there is either a north step \(N''\) following \(N'\) in the Dyck path or \(E\) follows \(N'\). The latter situation is impossible, as seen in Figure \ref{closeE}.

\begin{figure}
\begin{center}
\input{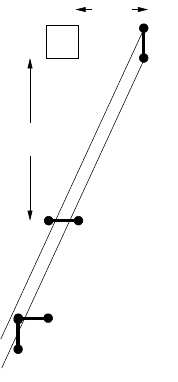_t}
\caption{If \(N'\) is followed by an east step \(E'\), there must be a closer north step between \(E\) and \(E'\).}
\label{Nclosest}
\end{center}
\end{figure}

\begin{figure}
\begin{center}
\input{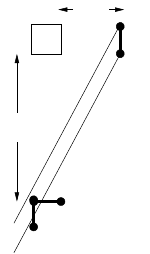_t}
\caption{If \(E\) follows \(N'\), then the line corresponding to the north end of \(N\) cannot cross \(E\).}
\label{closeE}
\end{center}
\end{figure}

\begin{figure}
\begin{center}
\input{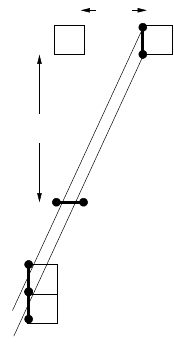_t}
\caption{A triple of parking spaces which must contribute to 
\( \tdinv \).}
\label{forceddinv}
\end{center}
\end{figure}

 Hence there must be another north step \(N''\) directly above \(N'\) as in Figure \ref{forceddinv}. Let \(j,k,i\) be the cars adjacent to \(N,N',N''\), respectively, in \(\PF\). Clearly \(k<i\), so either \( j<k<i \), \( k<j<i \) or \( k<i<j \). The second and third cases have \( j \) and \( k \) contributing to \( \tdinv \). The first and second have \( i \) and \( j \) contributing to \( \tdinv \).

Note that not all such arrangements of \( i,j,k \) correspond to a cell \( c \) with \( \frac{\arm(c)}{\leg(c)} \leq \frac{m}{n} < \frac{\arm(c)+1}{\leg(c)+1} \). Furthermore, no such triple of cars is associated to more than one such cell \( c \). This is because above we took \( N' \) as close to \( E \) as possible. Any cells \(c\) corresponding to the triple \( i,j,k \) must be directly west of \( j \) and north of some east step crossing \( SW(j) \). Multiple such cells would imply the existence of several east steps crossing \( SW(j) \). However, only one of these can be the closest to \( N' \).

Therefore when \( m<n \), 
\[
\tdinv(\PF) \geq \sum_{c \in \lambda(\Pi)}  \chi \left( \frac{\arm(c)}{\leg(c)} \leq \frac{m}{n} < \frac{\arm(c)+1}{\leg(c)+1} \right)
\]
and consequently $\dinv(PF) \geq 0$.
\end{proof}

In summary, see Figure \ref{dinvsum} for the configurations we need to look for to compute the dinv of a rational parking function.  For each parking function, we look for what we will call, inspired by the classical case, primary diagonal inversions and secondary diagonal inversions (as in Figure \ref{primdinv} and Figure \ref{secdinv} respectively for cars $c<c'$) and dinv corrections (labeled previously by Types C, F, G, and H in Figure \ref{aldinvtypes}), which are positive or negative depending on whether $n<m$ or $n>m$.
\section{A new fermionic formula}  Since the $Q_{m,n}$ are defined by familiar, previously well-studied objects with known relations, the formulation of the $(m,n)$ shuffle conjecture has implied a number of beautiful combinatorial questions.   Inspired by one such question, the authors were led to consider the $(n-1,n)$ rational parking functions in greater detail.  We give the inspiration first, followed by a construction which ties these parking functions closely to the classical case.

Recall that as first introduced in \cite{Enk}, the $E_{n,k}$ are defined by
\[e_n\left[X\frac{1-z}{1-q}\right]=\sum_{k=1}^n \frac{(z;q)_k}{(q;q)_k} E_{n,k}\]
where $(z;q)_k =(1-z)(1-zq)\cdots (1-zq^{k-1})$.  In particular, the same paper gives the first sharpening of the shuffle conjecture, which interprets $\nabla E_{n,k}$ as the weighted sum of $n\times n$ parking functions with Dyck paths that hit the diagonal in exactly $k$ places. Specifically,
\begin{equation}
\nabla E_{n,k} =\sum_{\substack{\PF \in \PF_n \\ \PF\text{touches in $k$ places}}} t^{\area(PF)}q^{\dinv({PF})}F_{\ides({PF})}.
\end{equation}

When $z=1/q$, we have
$$
e_n\left[ X\frac{1-1/q}{1-q}\right]= \frac{1-1/q}{1-q}E_{n,1}.
$$
This gives
\begin{equation} E_{n,1}= \left(-\frac{1}{ q}\right)^{n-1}h_n[X]\label{En1}\end{equation}
Accepting the identity from \cite{RationalOperators}
$$
Q_{n-1,n}\ses -(qt)^{1-n} \nabla D_n^*\nabla^{-1},
$$
whose proof is omitted here for the sake of brevity, the following theorem conjecturally ties classical parking functions to rational parking functions.

\begin{theorem} 
\begin{equation}\label{symfunidentity}
\nabla E_{n,1}= t^{n-1}Q_{n-1,n}\, (-1)^n.
\end{equation}
\end{theorem}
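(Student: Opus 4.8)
The plan is to deduce \eqref{symfunidentity} directly from the two identities displayed just before it: equation \eqref{En1}, which states $E_{n,1} = (-1/q)^{n-1} h_n[X]$, and the operator identity $Q_{n-1,n} = -(qt)^{1-n}\,\nabla D_n^*\nabla^{-1}$ quoted from \cite{RationalOperators}. I would rewrite each side of \eqref{symfunidentity} as an explicit scalar (a monomial in $q,t$ times a sign) times $\nabla h_n[X]$ and check the two scalars agree.

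First I would apply $\nabla$ to \eqref{En1}; by linearity this gives $\nabla E_{n,1} = (-1/q)^{n-1}\,\nabla h_n[X] = (-1)^{n-1} q^{1-n}\,\nabla h_n[X]$. For the right-hand side I would substitute the operator identity into $t^{n-1} Q_{n-1,n}(-1)^n$, reading $Q_{n-1,n}(-1)^n$ as the linear operator $Q_{n-1,n}$ applied to the constant symmetric function $(-1)^n$, i.e. $(-1)^n\,Q_{n-1,n}[1]$. Two elementary evaluations then finish the computation: $\nabla^{-1}[1] = 1$, because $1 = \widetilde{H}_\emptyset$ is fixed by $\nabla$ (its eigenvalue is $q^0 t^0$); and $D_n^*[1] = h_n[X]$, read straight off the definition of $D_k^*$, since the plethystic substitution sends the constant $1$ to itself and extracting the coefficient of $z^n$ from $\sum_{i\ge 0} z^i h_i[X]$ leaves $h_n[X]$. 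Chaining these, $Q_{n-1,n}[1] = -(qt)^{1-n}\,\nabla h_n[X]$, hence $t^{n-1} Q_{n-1,n}(-1)^n = (-1)^{n+1}\, t^{n-1}(qt)^{1-n}\,\nabla h_n[X]$. Since $t^{n-1}(qt)^{1-n} = q^{1-n}$ and $(-1)^{n+1} = (-1)^{n-1}$, this equals $(-1)^{n-1} q^{1-n}\,\nabla h_n[X]$, which matches the expression found for $\nabla E_{n,1}$, completing the argument.

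Once the two inputs are granted this is essentially a one-line calculation, so there is no real obstacle internal to the theorem; the genuine work is front-loaded into the quoted identities. Identity \eqref{En1} is elementary --- it is just the $z = 1/q$ specialization of the defining relation for the $E_{n,k}$, carried out a few lines above. The substantive ingredient is the operator identity $Q_{n-1,n} = -(qt)^{1-n}\,\nabla D_n^*\nabla^{-1}$, whose proof the paper deliberately omits and which is where the actual difficulty resides. The only points demanding care are the precise meaning of ``$Q_{n-1,n}(-1)^n$'' (the operator evaluated at a constant symmetric function, equivalently $(-1)^n$ times its value at $1$; the two readings agree by linearity) and the bookkeeping of the competing powers of $q$, $t$, and $-1$, which must cancel exactly as above.
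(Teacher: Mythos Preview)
Your argument is correct and is essentially identical to the paper's: both substitute the quoted operator identity $Q_{n-1,n} = -(qt)^{1-n}\nabla D_n^*\nabla^{-1}$, use $\nabla^{-1}1=1$ and $D_n^*1=h_n[X]$, and match scalars against $\nabla$ applied to \eqref{En1}. The paper compresses this into a single displayed line, but the content and the two cited inputs are the same.
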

\begin{proof} Using the definition of the $D_n^*$ and (\ref{En1}), we have:
$$
t^{n-1}Q_{n-1,n}\, (-1)^n= (-q )^{1-n} \nabla D_n^*\nabla^{-1}1
= (-q )^{1-n} \nabla h_n\ses \nabla E_{n,1}.
$$
\end{proof}

Since there are conjectured combinatorial interpretations of both sides of equation (\ref{symfunidentity}), it is reasonable to try to show these conjectures are consistent.  Thus, let \(\En1\) give the set of classical parking functions viewed, as originally, in the \(n\times n\) grid, with paths which touch the diagonal only where required (i.e. at the top right and bottom left).  On these objects, we may use the (relatively) straightforward classical definitions of area and dinv, which we will denote as such.  Similarly, let \(\Q\) give all parking functions in the \(({n-1}) \times n\) lattice.  On these objects, we compute the more complicated dinv and area of the rational conjectures.  In particular, we will use \(\rdinv\) and \(\rarea\) in this section to differentiate them from the classical dinv and area when there is danger of confusion.  Then the main theorem of this section is the following:
\begin{theorem} \label{Result5}
\begin{align}\label{combinatorialidentity}
\sum_{\PF\in\En1}t^{\area(\PF)}q^{\dinv(\PF)}&F_{\ides(\PF)}\\&=t^{n-1}\sum_{\PF\in \Q}t^{\rarea(\PF)}q^{\rdinv(\PF)}F_{\ides(\PF)}\nonumber
\end{align}
\end{theorem}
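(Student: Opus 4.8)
The plan is to construct an explicit bijection $\phi \colon \En1 \to \Q$ and to show that it transports the statistics correctly, namely that for each $\PF \in \En1$ we have $\ides(\phi(\PF)) = \ides(\PF)$, $\rarea(\phi(\PF)) = \area(\PF) - (n-1)$, and $\rdinv(\phi(\PF)) = \dinv(\PF)$. Granting such a $\phi$, equation (\ref{combinatorialidentity}) follows immediately by substituting $\PF' = \phi(\PF)$ on the right-hand side and matching terms. The finer claim---that families with the same set of cars on each classical diagonal match up---will be automatic once $\phi$ is defined in a way that preserves the diagonal-by-diagonal (i.e.\ $\rank$) structure of the cars.

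The first and most natural step is to understand the underlying Dyck paths. A path in $\En1$ is an $n \times n$ Dyck path touching the main diagonal only at $(0,0)$ and $(n,n)$; equivalently, after the initial north step and before the final east step it stays strictly above $y = x$. Such paths are in obvious bijection with $(n-1) \times n$ rational Dyck paths: delete the forced first north step (or, dually, read off the sequence of north/east steps in between). I would make this correspondence precise and check that it is a bijection on the level of paths, then lift it to parking functions by carrying the car labels along (the column-increasing condition is preserved since columns are unchanged). The area bookkeeping should then be a direct count: removing the bottom cell of each of the $n-1$ nontrivial columns, or more precisely comparing the number of full cells above $y=x$ in the $n\times n$ picture with the number above $y = \tfrac{n}{n-1}x$ in the $(n-1)\times n$ picture, should give exactly the shift by $n-1$. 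The $\ides$ statement requires checking that the classical reading order (by diagonals, NE to SW) agrees with the rational reading order (by decreasing $\rank$) under $\phi$; since $m = n-1$ and $\gcd(m,n)=1$, the tie-breaking term $\lfloor x/m \rfloor$ in $\rank$ must be shown to reproduce the classical left-to-right order within a diagonal, so that the two reading words literally coincide.

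The crux is the $\dinv$ identity. By Corollary \ref{MainCor}, since here $m = n-1 < n$, we have $\rdinv(\PF) = \tdinv(\PF) - (\text{number of negative dinv corrections})$, and by Theorem \ref{Result4} (Result 4) the negative dinv corrections are in bijection with the corners of the Dyck path formed by an east step followed by a north step. So I must show that for $\PF \in \En1$,
\[
\dinv(\PF) = \tdinv(\phi(\PF)) - \#\{\text{EN-corners of } \Pi(\phi(\PF))\}.
\]
The natural approach is to compare the classical $\dinv$ (counting primary and secondary attacking inversions among the cars) with $\tdinv$ of the image. Both count pairs of cars whose ranks/diagonals are close in a prescribed sense, so I would set up a pair-by-pair comparison: a classical primary or secondary inversion of $\PF$ should correspond to a $\tdinv$ pair of $\phi(\PF)$, except that the shift of diagonals induced by deleting the first north step creates or destroys exactly the pairs accounted for by the EN-corner count. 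Concretely, $\rank$ in the $(n-1)\times n$ lattice is $ny - (n-1)x + \lfloor x/(n-1)\rfloor$, and I would track how the classical condition ``$b$ is in the diagonal of $s$ or the diagonal just above'' deforms under this change of lattice, isolating the discrepancy as a sum over path corners.

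The step I expect to be the main obstacle is precisely this last comparison: bounding and identifying the discrepancy between classical $\dinv$ and rational $\tdinv$ term by term, and proving it equals the EN-corner count exactly (not just up to sign or up to an inequality). A cleaner alternative, which I would pursue in parallel, is to avoid comparing $\dinv$ and $\tdinv$ directly and instead use the fermionic/recursive structure: show both sides of (\ref{combinatorialidentity}) satisfy the same recursion obtained by removing the highest-$\rank$ car (equivalently peeling off a diagonal), reducing to a statement about smaller parking functions; the $t^{n-1}$ prefactor and the EN-corner correction should emerge from the base of that recursion. Either way, the conceptual content is that the ``dinv correction'' term, which in general is mysterious, collapses in the $m=n-1$ case to a simple path corner count (Theorem \ref{Result4}), and that is exactly the slack needed to line up with the classical formula.
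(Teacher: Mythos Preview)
Your primary plan has a genuine gap: the ``obvious'' bijection $\phi$ that simply reinterprets a path in $\En1$ as a path in $\Q$ does \emph{not} preserve $\ides$. The reading word is taken by decreasing $\rank$, and in the $(n-1,n)$ lattice moving one step northeast along a classical diagonal sends $\rank'$ from $(n-1)y-nx$ to $(n-1)(y+1)-n(x+1)=\rank'-1$; thus within each classical diagonal the $\Q$ word is read from southwest to northeast, the \emph{opposite} of the classical order. So the two reading words do not literally coincide (the paper's Figure~\ref{fig:diffdinv} gives an explicit $n=3$ example with words $(2,3,1)$ versus $(3,2,1)$), and your check that ``the tie-breaking term reproduces the classical left-to-right order'' would fail. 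The paper in fact remarks explicitly that this obvious map holds neither $\dinv$ nor $\ides$ fixed. Since the identity of $\ides$ is needed termwise for the quasisymmetric functions to match, the direct-bijection route cannot be completed as stated.

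Your alternative---prove both sides satisfy the same recursion/fermionic structure---is the approach the paper actually takes, and it is worth saying what the missing idea is. The paper groups parking functions on each side by their \emph{diagonal word} $\tau$ and runs the Haglund--Loehr insertion, but with one crucial twist: on the $\Q$ side, cars are inserted from \emph{southwest to northeast} along each diagonal rather than northeast to southwest. This reversal exactly compensates for the reversed reading order, so that the ``optional'' i-descents created at each step match those in the classical tree, and it makes the $\rdinv$ increment work out: placing a car as far southwest as possible creates zero new $\rdinv$ (any new corner, hence negative dinv correction, is cancelled by a forced new $\tdinv$ pair), and each further step northeast adds exactly one. The resulting bijection is thus tree-position to tree-position, not cell to cell; your pair-by-pair comparison of classical $\dinv$ with $\tdinv(\phi(\PF))$ under the identity map will not balance against the corner count.
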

Noting that the ``coarea'' is the number of complete squares above a Dyck path, a first useful observation is that (\ref{combinatorialidentity}) is trivial when $q=1$ and we disregard the Gessel quasi-symmetric functions.
\begin{lemma}\label{lem:arearight}
\begin{equation*}
\sum_{\PF\in\En1}t^{\area(\PF)}=t^{n-1}\sum_{\PF\in \Q}t^{\rarea(\PF)}
\end{equation*}
\end{lemma}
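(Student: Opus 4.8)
The plan is to set up a bijection between $\En1$ and $\Q$ that shifts area by exactly $n-1$, or more precisely between the underlying Dyck paths (since area depends only on the path and each Dyck path in either family supports the same number of labeled parking functions, namely $n!$ over the product of column-lengths — actually the count is the same in both cases because the column structure is preserved by the bijection I describe below). First I would recall the standard correspondence between Dyck paths in $\En1$ and $(n-1,n)$-Dyck paths. A path in $\En1$ is an $n \times n$ Dyck path touching the diagonal $y=x$ only at $(0,0)$ and $(n,n)$; equivalently, after its initial north step and before its final east step, it stays strictly above the diagonal. Removing that forced first north step and forced last east step, and re-reading the remaining path in the $(n-1)\times n$ box, gives a path from $(0,0)$ to $(n-1,n)$; I would check it stays weakly above the line $y = \frac{n}{n-1}x$, so it is a genuine $(n-1,n)$-Dyck path, and that this map is a bijection onto $\Q$'s path set (its inverse prepends a north step and appends an east step).

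Next I would track the area. For a path $\Pi \in \En1$, $\area(\Pi)$ counts full cells between $\Pi$ and $y=x$. For the corresponding rational path $\Pi'$, $\rarea(\Pi')$ counts full cells between $\Pi'$ and $y = \frac{n}{n-1}x$. The key computation is that this bijection decreases the number of full cells by exactly $n-1$: intuitively, passing from the diagonal of slope $1$ to the slightly steeper diagonal of slope $\frac{n}{n-1}$, together with deleting the first north and last east step, removes one cell per row (or per column), and there are $n-1$ of them. Concretely I would compare $\coarea$ instead, or count directly: in row $i$ (for $i=1,\dots,n-1$, indexing appropriately), the cells strictly between the two reference lines and under the path differ by exactly one. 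Summing gives $\area(\Pi) = \rarea(\Pi') + (n-1)$, which is the $t^{n-1}$ factor.

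Finally, since the labelings (column-strict fillings with $1,\dots,n$) are in obvious bijection under the path correspondence — the columns of $\Pi$ other than the first trivial one are exactly the columns of $\Pi'$, and the $\En1$ condition forces a single car on the main diagonal which matches the rational setup — summing over all parking functions supported on corresponding paths yields
\begin{equation*}
\sum_{\PF\in\En1}t^{\area(\PF)}=\sum_{\PF'\in\Q}t^{\rarea(\PF')+(n-1)}=t^{n-1}\sum_{\PF'\in\Q}t^{\rarea(\PF')}.
\end{equation*}
I expect the main obstacle to be the area bookkeeping: one must be careful about which reference line is used, about the off-by-one from deleting the initial north step and terminal east step, and about the fact that the rational main diagonal passes through lattice points only at the two endpoints of the $(n-1)\times n$ box, so the "cells between path and diagonal" count needs the floor-function care already flagged in the definition of $\rank$. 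Verifying the per-row contribution is exactly $1$ in every case (including rows where the path is far from both diagonals) is the crux; everything else is routine.
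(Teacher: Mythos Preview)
Your overall approach---set up a bijection between $\En1$ and $\Q$ and check it shifts area by $n-1$---is exactly the paper's, but your description of the bijection contains a concrete error. You propose to delete both the forced first north step \emph{and} the forced last east step; doing so leaves a path with $n-1$ north steps and $n-1$ east steps, which (after any translation) lives in an $(n-1)\times(n-1)$ box, not a path from $(0,0)$ to $(n-1,n)$ as you then assert. The correct map deletes only the last east step: an $\En1$ path necessarily passes through $(n-1,n)$ (it must end with an east step and cannot visit $(n-1,n-1)$), and its truncation to $(n-1,n)$ is precisely an $(n-1,n)$-Dyck path, since for a lattice point $(x,y)$ with $0<x\le n-1$ the conditions $y>x$ and $y\ge \tfrac{n}{n-1}x$ both reduce to $y\ge x+1$. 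Correspondingly it is the \emph{last} column, not the first, that is trivially empty of cars.

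Once the bijection is fixed, the area bookkeeping you flag as the crux becomes a one-line computation via $\coarea$, exactly as the paper does: deleting the final east step does not change the Ferrers shape above the path, so $\coarea$ is preserved, and
\[
\area=\frac{n(n-1)}{2}-\coarea,\qquad \rarea=\frac{(n-2)(n-1)}{2}-\coarea,
\]
whence $\area-\rarea=n-1$. No per-row floor analysis is needed.
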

\begin{proof}
In fact, any classical parking function that is in \(\En1\) and thus stays strictly above the diagonal line from \((0,0)\) to \((n,n)\) is precisely a parking function that remains above the diagonal line from \((0,0)\) to \((n-1,n)\) and thus is in \(\Q\). For \(\PF\in \En1\), \[\area(\PF)=\frac{n(n-1)}{2}-\operatorname{coarea}(PF).\]
For \(\PF\in \Q\), \[\rarea(\PF)=\frac{(n-2)(n-1)}{2}-\operatorname{coarea}(PF)\] and thus the change of area is as predicted.
\end{proof}
Furthermore, it is useful to note that the problem is not trivial from here; that is, this obvious map as used above does not in fact hold the dinv or ides of a parking function fixed.
While the dinv of a classical parking function $\PF$ is the same as the dinv of the same parking function viewed in the \(({n+1}) \times n\) grid, it is quite different from the dinv of that same parking function viewed in the \(({n-1}) \times n\) grid. Moreover, in the classical case, we read the word by diagonals, moving from northeast to southwest within a diagonal, while in the \(({n-1}) \times n\) case, ranks increase (rather than decrease, as they should when we determine the word) within a diagonal when we move from northeast to southwest.

For example, see Figure~\ref{fig:diffdinv}.  For the central, classical parking function, we notice that 2 and 3 are in the same diagonal, increasing from left to right, and thus form a secondary diagonal inversion.  Similarly, for the rightmost parking function, we notice that the top of the north step labeled by car 2 falls between  the two diagonals framing the north step beside car 3.  Thus there is a possibility of a secondary diagonal inversion between these two cars if they increase in value from left to right, as in fact they do.  Since there are no additional diagonal inversions in this parking function and no cells in the dinv correction set, as expected the \(4 \times 3\) parking function has identical statistics to the statistics for the classical parking function.  In contrast, the relative angle of the diagonals means that in the \(2\times 3\) parking function diagonals from the top and bottom of the north step beside car 3 surround the \emph{bottom} of the north step next to car 2.  Thus the two could only form a primary diagonal inversion.  Since 2 and 3 are increasing from left to right, in fact they form neither type of inversion.  In fact, there is a single secondary diagonal inversion in this parking function as well (between the 1 and 3) and a single cell in the dinv correction set, so the dinv of the first parking function is zero.  Moreover, the word of the left parking function is $(2,3,1)$, while the word of the right and center parking functions is $(3,2,1)$.
\begin{figure}
\begin{center}
 \includegraphics[width=0.75\textwidth]{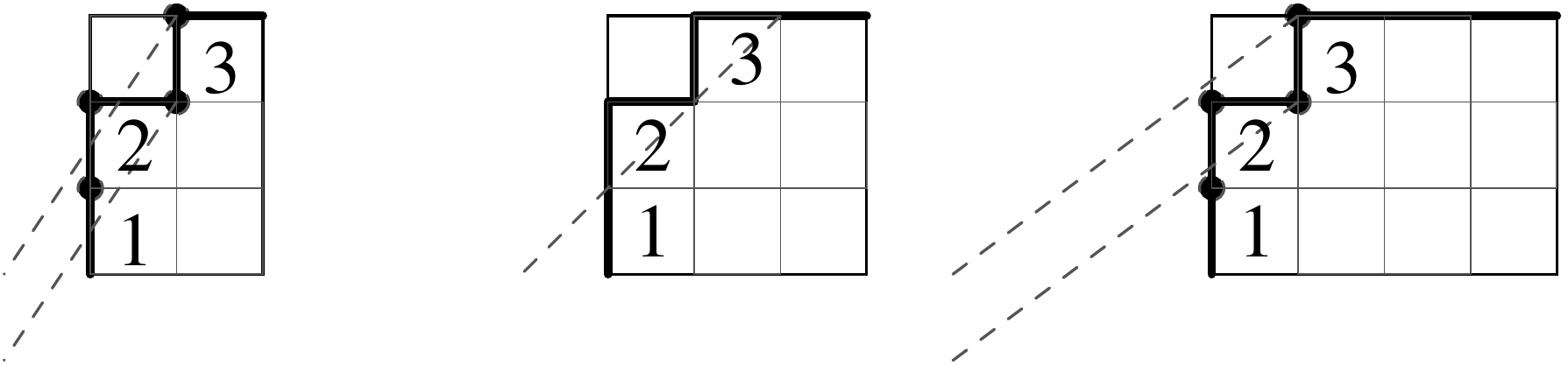}
 \caption{The dinv of each parking function (defined classically only for the middle parking function) is 0, 1, and 1 respectively.}\label{fig:diffdinv}
\end{center}
\end{figure}

In fact, it is instructive to characterize exactly what sorts of cells form a dinv or a dinv correction in the 
\((n-1)\times n\) case.
\begin{theorem} \label{Result4}
A cell in \(\PF \in \Q \) forms a dinv correction if and only if its south and east borders are formed by the path of \(\PF\).
\end{theorem}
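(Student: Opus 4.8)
Since $m=n-1<n$, Corollary~\ref{MainCor} (the case $n>m$) gives
\[
\dinv(\PF)=\tdinv(\PF)-\sum_{c\in\lambda(\Pi(\PF))}\chi\!\left(\frac{\arm(c)}{\leg(c)}\le\frac{n-1}{n}<\frac{\arm(c)+1}{\leg(c)+1}\right),
\]
with the conventions $\frac00=0$ and $\frac x0=\infty$. Thus the cells that form a dinv correction are exactly the $c\in\lambda(\Pi(\PF))$ satisfying this double inequality, and the plan is to establish two things: (i) in the $(n-1)\times n$ lattice this inequality is equivalent to $\arm(c)=\leg(c)=0$; and (ii) for a cell $c$ of $\lambda(\Pi)$, having $\arm(c)=\leg(c)=0$ is equivalent to the south and east borders of $c$ both being steps of $\Pi$.

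For (i), the backward direction is a one-line check: $\frac00=0\le\frac{n-1}{n}<1=\frac11$. For the forward direction I would first dispose of the degenerate case $\leg(c)=0$: then the left inequality reads $\frac{\arm(c)}{0}\le\frac{n-1}{n}<1$, so $\arm(c)=0$. If instead $\leg(c)\ge1$, set $a=\arm(c)$, $\ell=\leg(c)$ and clear denominators. The right inequality yields $(n-1)(\ell+1)<n(a+1)$, hence $(n-1)\ell\le na$ since both sides are integers; the left inequality yields $na\le(n-1)\ell$. Therefore $na=(n-1)\ell$, and since $\gcd(n,n-1)=1$ we must have $\ell=nk$ and $a=(n-1)k$ for some integer $k\ge1$. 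But every cell of $\lambda(\Pi)$ lies in the $(n-1)\times n$ rectangle, so $a=\arm(c)\le n-2<n-1\le(n-1)k$, a contradiction. Hence $\leg(c)=0$, and then also $\arm(c)=0$.

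For (ii), I would use that $\lambda(\Pi)$ is an English Ferrers diagram (its rows are left-justified and its columns top-justified) together with the fact that no cell of $\lambda(\Pi)$ can meet the bottom row or the right column of the rectangle, because an $(n-1,n)$-Dyck path must begin with a north step and end with an east step (otherwise it drops strictly below $y=\frac{n}{n-1}x$). Consequently $\arm(c)=0$ if and only if the cell directly east of $c$ lies outside $\lambda(\Pi)$, i.e.\ below $\Pi$, i.e.\ the (vertical) east border of $c$ is a north step of $\Pi$; symmetrically, $\leg(c)=0$ if and only if the (horizontal) south border of $c$ is an east step of $\Pi$. Combining (i) and (ii) proves the theorem, and in fact shows that dinv corrections are in bijection with the inner corners of $\Pi(\PF)$ at which an east step is immediately followed by a north step.

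I expect the only genuinely delicate point to be the arithmetic in (i): one needs both the coprimality $\gcd(n,n-1)=1$ and the geometric bound $\arm(c)\le n-2$ to rule out the spurious solutions $(\arm(c),\leg(c))=((n-1)k,nk)$ with $k\ge1$, and one must be careful with the division-by-zero conventions, handling the case $\leg(c)=0$ separately. Everything else is bookkeeping.
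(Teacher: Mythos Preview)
Your proof is correct and follows the same overall plan as the paper: identify the dinv-correction inequality from Corollary~\ref{MainCor}, show it forces $\arm(c)=\leg(c)=0$, and translate that into the border condition. The paper handles the arithmetic in step~(i) a bit more directly: from $\frac{\arm(c)}{\leg(c)}<1$ it gets $\arm(c)+1<\leg(c)+1$, so $\frac{\arm(c)+1}{\leg(c)+1}<1$, and then observes that no fraction with denominator at most $n$ lies strictly between $\frac{n-1}{n}$ and $1$ (since $\frac{a+1}{\ell+1}\le\frac{\ell}{\ell+1}\le\frac{n-1}{n}$); this avoids both the coprimality step and the explicit bound $\arm(c)\le n-2$. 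Your coprimality route works just as well, and your explicit treatment of part~(ii) and of the boundary cases is more careful than the paper's, which leaves those points implicit.
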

\begin{proof}  Let \(c\) be a cell above the path such that
\[\frac{\operatorname{arm}(c)}{\operatorname{leg}(c)} \leq \frac{n-1}{n}<\frac{\operatorname{arm}(c)+1}{\operatorname{leg}(c)+1}.\] 
Since
\[\frac{\operatorname{arm}(c)}{\operatorname{leg}(c)} \leq \frac{n-1}{n}<1,\]
then \(\operatorname{arm}(c)+1<\operatorname{leg}(c)+1\) and we have 
\[\frac{n-1}{n}<\frac{\operatorname{arm}(c)+1}{\operatorname{leg}(c)+1}<\frac{n}{n}.\] 
Since \(\operatorname{leg}(c)+1\leq n\) and all arm lengths are non-negative integers, this is impossible.  The exception is the degenerate case when \(\operatorname{arm}(c)=\operatorname{leg}(c)=0\), exactly when the south and east borders of the cell are formed by the path. Hence $c$ is of this type.
\end{proof}

To finish characterizing diagonal inversions of parking functions in \(\Q\), it is useful to consider cars that are in the same `classical' diagonal, that is cars \(i\) and \(j\) such that there exists a positive integer \(k<n-1\) such that \(j\) is \(k\) cells below and \(k\) cells to the left of \(i\). Note that 
\[\rank(i) < \rank(j) + (n-1)k-nk=\rank(j)-k\]
and thus 
\[\rank(i)<\rank(j)<\rank(i)+n-1.\]
In a departure from the classical case, call these cars `primary attacking,' since they are cars that would become primary diagonal inversions if \(i<j\). Conversely, if the cells \(i\) and \(j\) are distance \(\Delta x\) in the horizontal direction and \(\Delta y\) in the vertical direction and form a primary diagonal inversion, as in Figure \ref{fig:inline} then 
\[\frac{n(\Delta x)}{n-1}-1<\Delta y<\frac{n(\Delta x)}{n-1}.\]
Since \(\Delta x\) is an nonnegative integer less than \(n-1\) and since \(\Delta y\) is an nonnegative integer less than \(n\), the leftmost inequality gives us that we have \(\Delta x\leq \Delta y\) while the right inequality gives that \(\Delta y\leq \Delta x\).  Thus only cars which are primary attacking (i.e.\ in the same classical diagonal) can create a primary diagonal inversion.

\begin{figure}
\begin{center}
\input{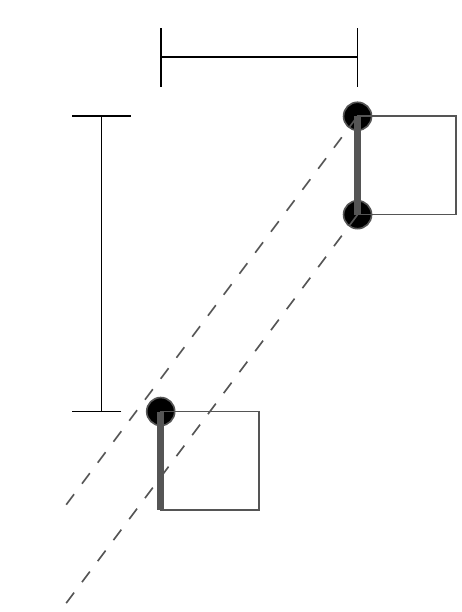_t}
\caption{Two cars that form a primary diagonal inversion if \(i<j\)}\label{fig:inline}
\end{center}
\end{figure}

Analogously, we should refer to cars which are one (classical) diagonal apart, with the car in the upper diagonal strictly further right, as secondary attacking, and note that (by a similar argument) they form a diagonal inversion only when the larger car is in the higher diagonal.

In \cite{dinvtree}, Haglund and Loehr assigned a ``diagonal word'' to classical parking functions.  In particular, it is the unique permutation whose runs (which are in increasing order) give the cars by diagonal, starting with the highest diagonal.  See Figure \ref{fig:diagonalword} for an example.  Here we find it useful to extend this definition to cars in \(\Q\), where we say that cars are in the same diagonal if they are in the same ``classical'' diagonal.
\begin{figure}
\begin{center}
\input{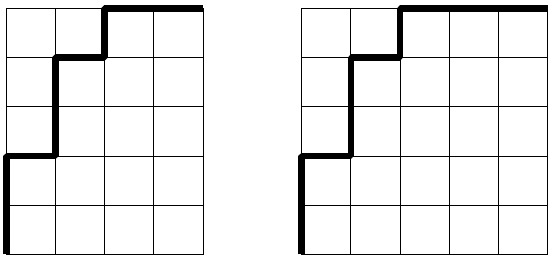_t}
\caption{Parking functions in \(\Q\) and \(\En1\) with the same diagonal word, \((3,5,2,4,1)\).}\label{fig:diagonalword}
\end{center}
\end{figure}
In the same work, Haglund and Loehr describe a very natural recursive construction for forming the parking functions with a given diagonal word.  In particular, they describe adding cars to a parking function one at a time, starting with the last car in the diagonal word and working forward, placing each car in all possible ways, starting with the northeast-most choice and systematically working southwest.  Imagining Figure \ref{fig:dualdinvtree} with an additional empty column on the right of each parking function, we get a tree showing all the recursive choices for the diagonal word \((3,5,2,4,1)\).  In particular, Haglund and Loehr were the first to observe that at every step of this recursive procedure, choosing to move a particular car further southwest increased the diagonal inversions in the parking function by exactly one.  Thus in Figure \ref{fig:dualdinvtree}, the highest parking function has dinv 0, while the lowest has dinv 3, since three times in its construction, a car was moved past its northwest-most possible spot.  In fact, Haglund and Loehr proved that
\[\sum_{\operatorname{diag}(\PF)=\tau}t^{\area(\PF)}q^{\dinv(\PF)}=t^{\operatorname{maj}(\tau)}\prod_{i=1}^n[w_i]_q,\]
where \(w_i\) gives the number of possible positions where the \(\tau_i\)th car can be placed.  This frequently is referred to in the literature as a `fermionic formula' for the classical parking functions.
In fact, we have the following theorem that gives an interesting connection to the parking functions in \(\Q\), in particular showing that these objects satisfy the same fermionic formula.  Note that parking functions in both \(\En1\) and \(\Q\) have only a single car in the lowest diagonal, and thus their diagonal words have a final run of length one.
\begin{theorem}
For \(\tau\) with a final run of length one,
\begin{align*}\sum_{\substack{\PF \in \En1\\\operatorname{diag}(\PF)=\tau  }}t^{\area(\PF)}q^{\dinv(\PF)}&F_{\ides(\PF)}\\&=t^{n-1}\sum_{\substack{\PF \in \Q\\\operatorname{diag}(\PF)=\tau}}t^{\rarea(\PF)}q^{\rdinv(\PF)}F_{\ides(\PF)}. \end{align*}
In particular, \[\sum_{\substack{\PF \in \Q\\\operatorname{diag}(\PF)=\tau}}t^{\area(\PF)}q^{\dinv(\PF)}=t^{\operatorname{maj}(\tau)}\prod_{i=1}^n[w_i]_q,\] where the $w_i$ are as above.
\end{theorem}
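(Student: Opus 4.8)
The plan is to transport, to the rational grid, the recursive construction of Haglund and Loehr \cite{dinvtree} that builds all classical parking functions with a prescribed diagonal word. First, a reduction: since $\tau$ ends in a run of length one, any classical parking function with diagonal word $\tau$ has exactly one car on its main diagonal, hence lies in $\En1$; thus the left-hand sum is literally the Haglund--Loehr sum over $\{\PF\in\PF_n:\operatorname{diag}(\PF)=\tau\}$, which their recursion organizes into a tree whose $i$-th branching has $w_i$ children, $w_i$ being the number of legal placements of the car $\tau_i$. It therefore suffices to produce a bijection $\phi$ onto $\{\PF\in\Q:\operatorname{diag}(\PF)=\tau\}$ with $\ides(\phi(\PF))=\ides(\PF)$, $\dinv(\PF)=\rdinv(\phi(\PF))$ and $\area(\PF)=\rarea(\phi(\PF))+(n-1)$; equality of the two generating functions then follows.

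I would obtain $\phi$ by running the Haglund--Loehr recursion simultaneously in the $n\times n$ and $(n-1)\times n$ grids. The first point to check is that this is ``the same tree'' on the $\Q$ side: the path identification underlying Lemma~\ref{lem:arearight} (a classical path strictly above $y=x$ except at its ends becomes, after deleting its final east step, a Dyck path above $y=\frac{n}{n-1}x$) is compatible with adding cars diagonal by diagonal, the column-increasing condition being exactly the same constraint; hence the legal columns for $\tau_i$, and in particular $w_i$, are the same as classically. Since a given branch determines the same combinatorial data in both grids, $\area=\rarea+(n-1)$ is immediate --- indeed $\area$ and $\rarea$ are each constant on the fiber, being $\binom{n}{2}-\coarea$ and $\binom{n-1}{2}-\coarea$ respectively.

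The crux is to show that, just as in the classical case, moving the newly placed car one step southwest along its diagonal raises $\rdinv$ by exactly $1$. Here I would invoke Corollary~\ref{MainCor}: as $m=n-1<n$ we have $\rdinv(\PF)=\tdinv(\PF)-(\text{number of dinv corrections of }\PF)$, and Theorem~\ref{Result4} identifies the dinv corrections with the valleys (an east step immediately followed by a north step) of the underlying Dyck path. So $\rdinv$ is a classical-looking $\tdinv$ term minus a pure path statistic. Specializing the rank to $m=n-1$ (columns run from $0$ to $n-2$, so the floor correction vanishes and a car in column $x$ on classical diagonal $d$ has rank $(n-1)d-x$), one checks the translations already used in the paper: cars on a higher classical diagonal have strictly larger rank, rank decreases with $x$ along a diagonal, and the $\tdinv$-window ``$\rank(i)<\rank(j)<\rank(i)+(n-1)$'' is exactly the primary/secondary attacking relation for the classical diagonals. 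With these in hand the southwest slide becomes a local move, and I would tabulate its effect on $\tdinv$ (the pairs gained or lost with the cars it slides past) and on the number of valleys (the north step attached to the sliding car migrates, creating or destroying corners) and verify that the two changes combine to a net $+1$, matching the classical increment. Reconciling the arithmetic bookkeeping of $\tdinv$ with the geometric bookkeeping of valleys under this move is the step I expect to be the main obstacle.

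Finally, for $\ides$ one shows that the recursion inserts $\tau_i$ into the rank-reading word of the $\Q$-parking function at the position governed by the same branching parameter that controls its insertion into the classical diagonal-reading word; although within a given classical diagonal the two reading words run in opposite spatial orders, the high-to-low order of the diagonals themselves coincides (again by the rank computation), so the parameter can be read off consistently, and $F_{\ides}$ is built up identically along the two recursions. This yields the quasisymmetric identity. Summing it over all descent sets and applying the Haglund--Loehr product formula to the left side then gives $\sum_{\PF\in\Q,\ \operatorname{diag}(\PF)=\tau}t^{\rarea(\PF)}q^{\rdinv(\PF)}=t^{\operatorname{maj}(\tau)-(n-1)}\prod_{i=1}^{n}[w_i]_q$, which is equivalent to the stated fermionic formula.
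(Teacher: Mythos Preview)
Your overall strategy---running a Haglund--Loehr insertion recursion on the $(n-1)\times n$ side and matching it against the classical one---is exactly the paper's approach, and your decomposition $\rdinv=\tdinv-(\text{number of valleys})$ via Corollary~\ref{MainCor} and Theorem~\ref{Result4} is the right tool. However, the central claim of your proposal has the wrong sign: in $\Q$ it is moving the newly placed car one step \emph{northeast} (not southwest) that raises $\rdinv$ by exactly one. The paper proves precisely this, first showing that the southwest-most placement creates no new $\rdinv$ (any dinv correction that appears is cancelled by a simultaneously created inversion between two already-placed cars), and then checking case by case that each northeast slide yields a net $+1$. You might have anticipated this reversal from your own observation that the rank-reading order along a classical diagonal is opposite to the classical reading order.

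This reversal has a consequence you have not accounted for: the bijection $\phi$ that matches $\dinv$ to $\rdinv$ cannot be the identity on positions. Rather, $\phi$ sends the classical parking function built by taking the $j$th choice from the northeast at each insertion to the rational parking function built by taking the $j$th choice from the \emph{southwest}. Your $\ides$ argument, as written, is aimed at the identity-on-positions map and does not go through; the paper's own example (Figure~\ref{fig:diffdinv}, where the same placement has word $(3,2,1)$ classically but $(2,3,1)$ in the $2\times 3$ grid, hence different $\ides$) already shows the identity map fails for both $\dinv$ and $\ides$. One must instead argue, as the paper does, that an ``optional'' $i$-descent (when $i$ and $i{+}1$ share a diagonal) occurs exactly when the insertion index of $i$ exceeds that of $i{+}1$, a criterion that reads the same on both sides of the flipped bijection.
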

\begin{proof}  We begin with almost the same recursive procedure as Haglund and Loehr.  Our point of departure is that this time we place the cars in all possible ways from southwest to northeast along a diagonal.  

Note first that while in the classical case, we read words along the diagonals from highest diagonal to lowest diagonal, working from right to left within diagonals.  For parking functions in \(\Q\), however, ranks decrease from right to left within the same classical diagonals (since we add $n-1$ and subtract $n$ as we move northeast and add $n-1$ to move north), so reading the word of such a parking function follows the same pattern.  In particular, this means that there are two types of i-descent sets in both the classical and the $(n-1)\times n$ case: `forced' i-descents created by a car $i+1$ being in a higher diagonal than car $i$ (these i-descents are forced by the diagonal word) and `optional' i-descents created when cars $i$ and $i+1$ occur in the same diagonal and are in the wrong order in the reading word.  It is clear that the forced i-descents are the same in the classical and $(n-1)\times n$ cases.  Moreover, in both cases if $i+1$ and $i$ are on the same diagonal, $i$ will be placed in the algorithm after $i+1$.  The choices of position for $i$ are the possible positions of $i+1$ plus the position northeast of $i+1$ itself.  Say that we have placed $i+1$ in each case in the $j$th possible position according to the algorithm.  (Thus in the classical case it is $j$ choices from the northeast corner, while in the $(n-1)\times n$ case it is $j$ choices from the southwest corner.)  In each case, an optional i-descent occurs if $i$ is read after $i+1$, that is when $i$ is placed in at least the $j+1$st possible position according to each respective algorithm.  Thus each algorithm produces the same i-descent set at every step.

Note that just as in the classical case the \( \coarea \) of the parking function depends only on its diagonal word. In particular within a tree, the \( \coarea \) is fixed. As observed in Lemma \ref{lem:arearight}, the \( \coarea \) of the parking function in \( \Q \) which is the highest leaf of a given tree is the same as the \( \coarea \) of the parking function in \( \En1 \) which is the lowest leaf of the corresponding tree. Thus the \( \coarea \) of all of the parking functions in \( \Q \) with a given diagonal word is the same as the \( \coarea \) of all of the parking functions in \( \En1 \) with the same diagonal word. Therefore with an appropriate change in \(\area\), it remains to observe that the \(\rdinv\) is as claimed.

  In fact, we claim that every time we recursively place a car one step further northwest, we increase the \(\rdinv\) by exactly one, starting with a choice that corresponds to no increase in dinv. Thus in Figure \ref{fig:dualdinvtree}, the lowest parking function in the second to last column corresponds to a parking function in \(\Q\) with no \(\rdinv\), while the highest in the same column gives one with 3 \(\rdinv\).  We split the proof into two lemmas below.
\end{proof}

\begin{figure}
\begin{center}
\input{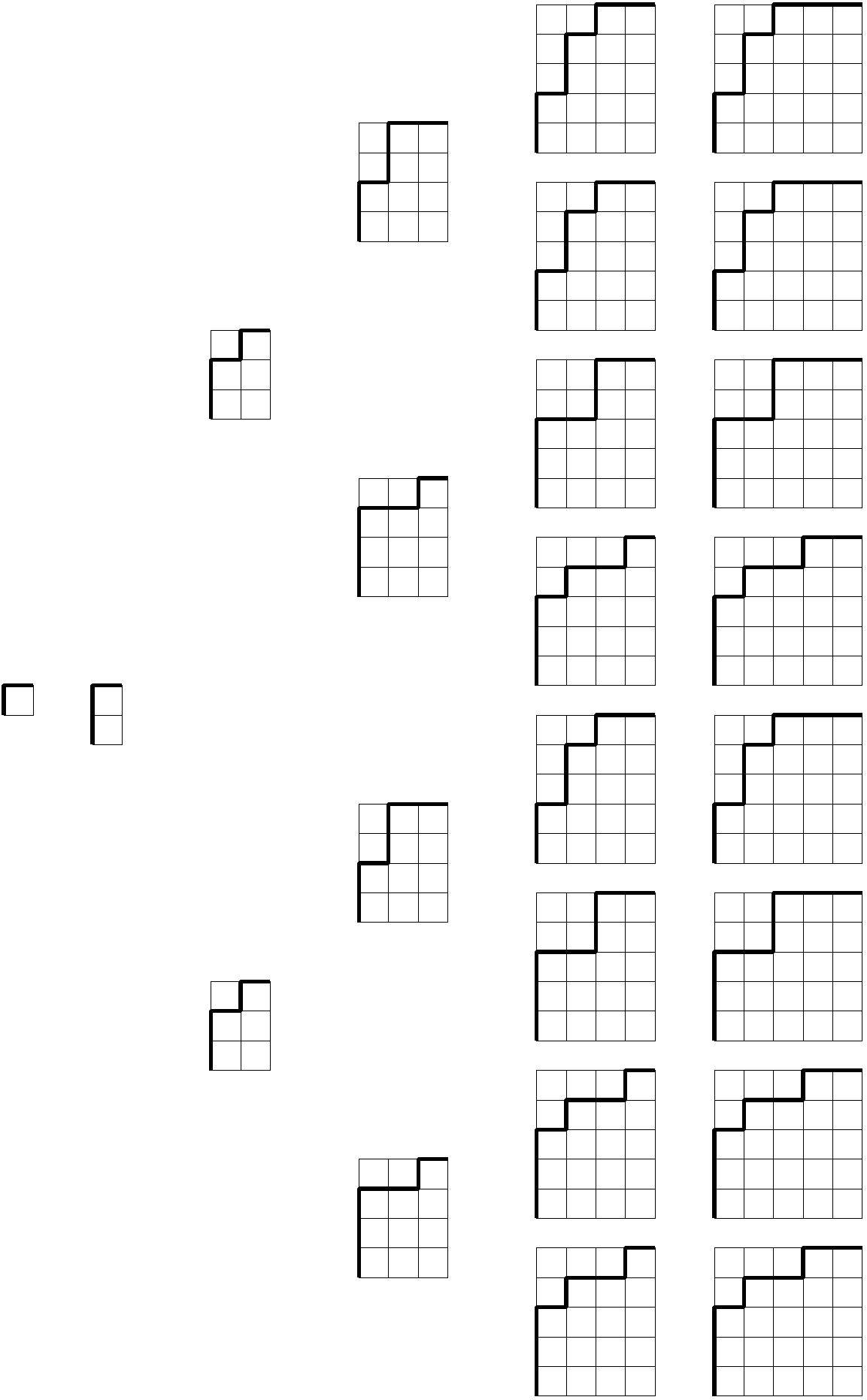_t}
\caption{Constructing all cars in \(\Q\) with diagonal word \((3,5,2,4,1)\).}\label{fig:dualdinvtree}
\end{center}
\end{figure}

\begin{lemma}
A car placed as far southwest as possible by the procedure outlined above does not create any new dinv.
\end{lemma}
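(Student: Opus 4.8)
The plan is to show directly that inserting the car $c$ at its south-west-most admissible position leaves $\rdinv$ unchanged. Write $\PF'$ for the partial parking function built just before $c$ is placed and $\PF''$ for the one obtained after placing $c$ south-west-most. By Theorems~\ref{Result2} and~\ref{Result4}, for objects of $\Q$ one has
\[
\rdinv \;=\; (\#\text{ primary diagonal inversions}) + (\#\text{ secondary diagonal inversions}) - (\#\text{ cells with south and east border on the path}),
\]
so it suffices to verify that, compared with $\PF'$, the car $c$ takes part in no new primary or secondary diagonal inversion and that its north step creates no new ``dinv correction'' cell (equivalently, no new valley of the underlying Dyck path).

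First I would determine which already-placed cars can possibly pair with $c$ in a diagonal inversion. By the descriptions of primary- and secondary-attacking pairs in $\Q$, only cars lying in $c$'s own classical diagonal $d$ or in a classical diagonal adjacent to $d$ are relevant. Since the procedure inserts cars in weakly increasing order of classical diagonal and, within a fixed diagonal, in decreasing order of value, the relevant already-placed cars are exactly the already-placed cars of diagonal $d$ (all of which exceed $c$) together with all cars of the diagonal immediately below $c$; no car of the diagonal above $c$ has yet been placed. I would then run the short case analysis. If $e$ is an already-placed car of diagonal $d$, then $e>c$, and a primary inversion of $\{c,e\}$ would require the larger car to sit weakly to the down-left; but the south-west-most choice places $c$ to the down-left of every already-placed car of diagonal $d$, so $\{c,e\}$ is not an inversion. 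If $e$ lies in the diagonal just below $c$: when $e>c$ the larger of the pair lies in the lower diagonal, so by the $\Q$-description of secondary inversions the pair is never an inversion; when $e<c$ a secondary inversion would require $c$ to lie strictly to the right of $e$, and one checks that the south-west-most placement keeps $c$ weakly to the left of every already-placed car of that diagonal. In every case $c$ belongs to no new diagonal inversion.

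It then remains to treat the correction term: I would argue that placing $c$ as far south-west as the Dyck-path condition permits attaches the north step of $c$ onto an existing run of north steps rather than immediately after an east step, so that no new cell with both its south and east border on the path appears. Together with the preceding paragraph this yields $\rdinv(\PF'') = \rdinv(\PF')$. The main obstacle --- and the place where the $(n-1)\times n$ geometry genuinely departs from the classical argument of Haglund and Loehr in~\cite{dinvtree} --- is making precise where the south-west-most admissible position lies relative to the already-placed cars; this is exactly where one uses that in $\Q$ the secondary-attacking condition is the left--right reflection of the classical one (so that the extremal inversion-free choice is the south-west-most, not the north-east-most, placement) and that legality of the Dyck path already forces every previously-placed car of the diagonal immediately below $c$ to lie weakly to the right of the column into which $c$ is inserted.
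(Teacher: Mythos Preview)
Your argument contains a genuine gap: both central claims --- that $c$ participates in no new diagonal inversion and that no new correction cell appears --- are false in general. The lemma holds because these two contributions \emph{cancel}, not because each is separately zero. The south-west-most admissible position for $c$ need not lie to the down-left of every already-placed car in its diagonal $d$: the admissible positions are either directly north of a smaller car in diagonal $d-1$ or immediately north-east of an already-placed (hence larger) car of diagonal $d$, and when no position of the first kind is available to the south-west, the extremal choice is of the second kind. Then $c$ sits just north-east of some larger $g$ in diagonal $d$; by the $\Q$ convention (a primary inversion occurs when the larger car is to the south-west) the pair $\{c,g\}$ \emph{is} a new primary inversion, and simultaneously $c$'s north step begins a fresh column, so a new valley --- a new correction cell --- appears beneath it. The $+1$ and $-1$ cancel. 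This is precisely Case~2 of the paper's proof.

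Even when the south-west-most position does place $c$ on top of a smaller car $h$ in diagonal $d-1$, you overlook displacement: that cell may already be occupied by a larger car $g$ of diagonal $d$, and inserting $c$ pushes $g$ one step north-east into a fresh column. This creates a new valley beneath $g$, while at the same time $g$ now lies strictly to the right of $h$ with larger value in the higher diagonal --- a new secondary inversion between two \emph{already-placed} cars. Again the contributions cancel. Thus your assertion that south-west-most insertion ``attaches the north step of $c$ onto an existing run of north steps'' fails in both scenarios, and what must be shown is the cancellation, not the vanishing, of the two terms.
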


\begin{proof}
First, note that we place cars within a diagonal with a higher value first.  When we place a car \(\tau_i\), it may be placed northeast of either another (larger) car in the same diagonal or north of a smaller car in the next lowest diagonal.  
\begin{case}
\(\tau_i\) is placed above a smaller car on the next lowest diagonal
\end{case}
Any secondary diagonal inversion formed by \(\tau_i\) must include a smaller car on the next lowest diagonal, strictly southwest of \(\tau_i\), contradicting the fact that \(\tau_i\) is as far southwest as possible.  Any primary diagonal inversion formed by \(\tau_i\) must include a car in the same diagonal, and thus by necessity a larger car than \(\tau_i\).  Thus \(\tau_i\) must be to the right of a strictly larger car, again contradicting the fact that it is as far southwest as possible.  Note that adding \(\tau_i\) could in fact create a dinv correction, as placing it above a smaller car could force a (bigger) car already in the same diagonal to move one step northeast. For example in Figure \ref{fig:dualdinvtree}, we add car 3 to the bottom parking function in \(\Q\) in the diagram and thus create a new ``corner'' above the 3.  But this is exactly the only case where we create a new diagonal inversion between two already existing elements in the parking function, that is between the car below \(\tau_i\) and the bigger car to its right (i.e. as between the 2 and 5 in our example.)
\begin{case}
\(\tau_i\) is just to the northeast of a single higher car on the same diagonal
\end{case}
With an argument similar to the previous case, it is easy to see that again \(\tau_i\) can only create a secondary diagonal inversion with another car if it is not as far southwest as possible.  \(\tau_i\) can only similarly have one (bigger) car to its left in the same diagonal and thus one increase in primary diagonal inversions.  (This occurs when we add the 3 beside the 5 in the fourth parking function from the top of Figure \ref{fig:dualdinvtree}.)  Since adding \(\tau_i\) creates exactly one corner and thus one new dinv correction, the change in dinv remains zero.  Notice that in this case no car in the top diagonal is moved weakly past a car in the next diagonal, so no new diagonal inversions are created between two cars when neither of them are \(\tau_i\).
\end{proof}
\setcounter{case}{0}
\begin{figure}
\begin{center}
\includegraphics[scale=.4]{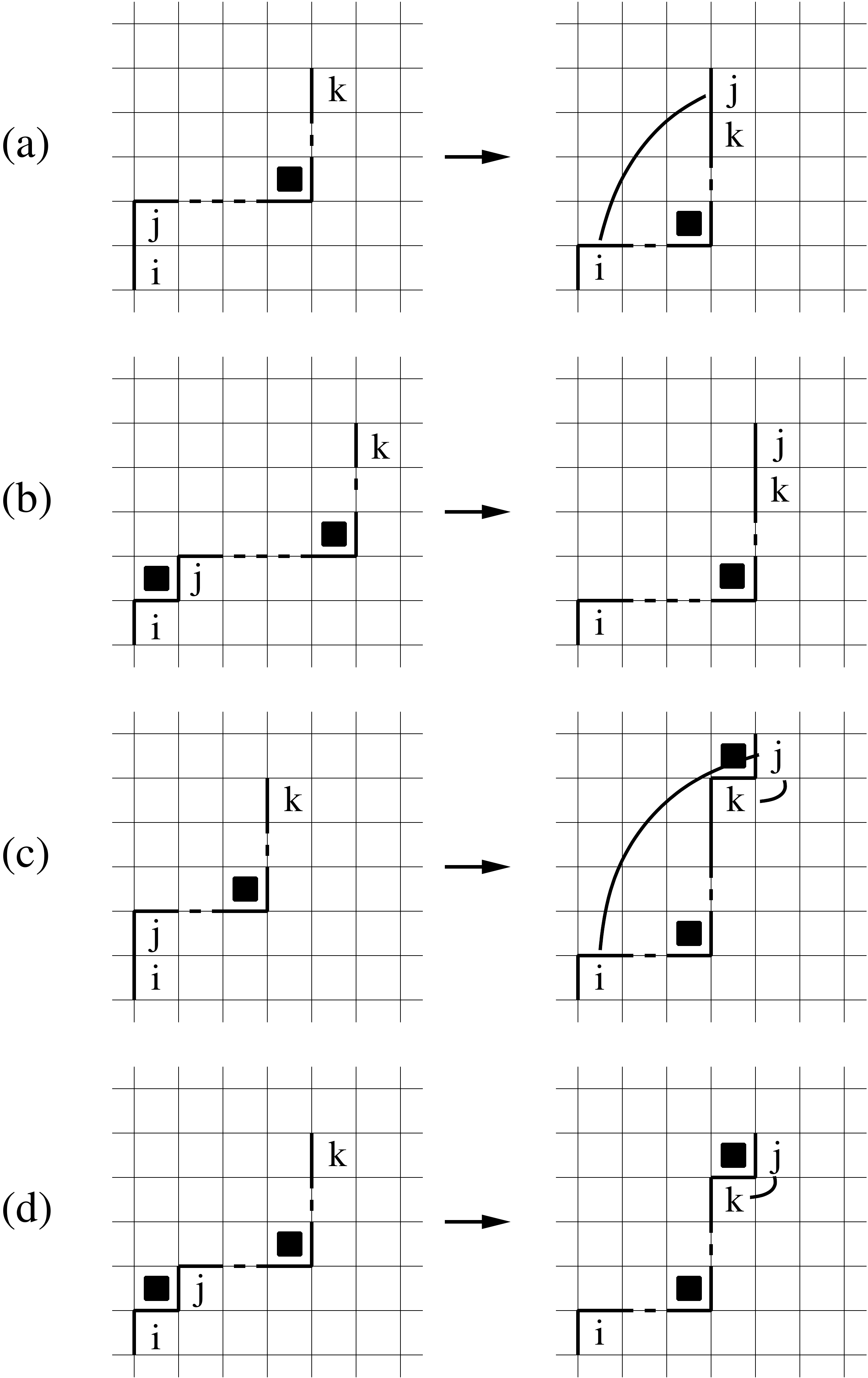} \caption{The different reasons that moving car $j$ can create an increase in dinv are highlighted by curves (to show new diagonal inversions) and squares (to show dinv correction cells).}\label{dinvproof}
\end{center}
\end{figure}
\begin{lemma}  Every time a car is moved one step further northeast by the outlined procedure, there is a corresponding increase in dinv of exactly one.
\end{lemma}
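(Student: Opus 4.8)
The plan is to turn the claim into a purely local statement about the underlying Dyck path and then settle it by the bounded case analysis displayed in Figure~\ref{dinvproof}, in the same spirit as Haglund and Loehr's original argument but keeping explicit track of the dinv correction.

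The first step is to rewrite $\rdinv$ in a form that separates the two phenomena that appear in the figure --- a newly created diagonal inversion versus a destroyed correction cell. Since here $m=n-1<n$, Corollary~\ref{MainCor} gives
$$\rdinv(\PF)=\tdinv(\PF)-v(\PF)\qquad\text{for every }\PF\in\Q,$$
where $v(\PF)$ is the number of cells of $\lambda(\Pi(\PF))$ in the dinv correction set; by Theorem~\ref{Result4} these are exactly the cells whose south and east borders lie on the path, that is, the valleys of $\Pi(\PF)$ (an east step immediately followed by a north step). So it suffices to prove that one northeast step of the car $j$ being placed changes $\tdinv-v$ by exactly $+1$.

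Next I would describe the effect of a single such step on the partial parking function in which only the cars $\tau_n,\dots,\tau_i=j$ placed so far appear. Exactly as in the preceding lemma and in the proof of Theorem~\ref{Result3}, I would present the move in terms of the \emph{closest} obstruction in the direction of motion, so that the north step of $j$ slides past a \emph{minimal} block of the picture. One then checks that all ranks of the other placed cars, and all rank comparisons among them, are unaffected; that $\rank(j)$ changes by the one step; and that only the at most two corners bounding the slid block are created or destroyed. Consequently $\tdinv$ can change only through pairs containing $j$, and $v$ only at those corners. The heart of the argument is the enumeration of the local configurations that a single northeast step can realize --- precisely the list of pictures in Figure~\ref{dinvproof}. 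In each of them either $j$ moves past exactly one obstructing car (a larger one in its own classical diagonal, or a smaller one one classical diagonal below it), producing exactly one new primary or secondary diagonal inversion by the characterizations around Figure~\ref{fig:inline}; or $j$ moves past exactly one valley of the path, which is destroyed. Moreover, as in the preceding lemma, such a move may force one larger car already present in $j$'s diagonal to slide one unit northeast; this auxiliary event creates one diagonal inversion together with one valley (or replaces one valley by another), so it contributes $0$ to $\tdinv-v$. Hence in every branch $\tdinv-v$ increases by exactly $1$: a new inversion adds $1$ and leaves $v$ unchanged; a destroyed valley adds $1$ through the $-v$ term and produces no inversion; the optional forced slide cancels itself.

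The main obstacle is the bookkeeping in this last step: verifying that one northeast move never triggers a cascade of forced slides, never changes the relative ranks among the cars left in place, and that along the minimal slid block the created and destroyed corners always cancel except for the single genuine unit of change. Minimality of the block --- choosing the nearest obstruction, just as in the proof of Theorem~\ref{Result3} --- is exactly what makes this manageable and reduces the verification to the finitely many configurations of Figure~\ref{dinvproof}. Feeding this single-step statement back into the recursion, with the all-southwest leaf having $\rdinv=0$ by the preceding lemma, identifies $\rdinv(\PF)$ with the number of northeast steps used to construct $\PF$; together with the area shift of Lemma~\ref{lem:arearight} this gives the claimed fermionic formula.
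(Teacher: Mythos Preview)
Your overall strategy matches the paper's: write $\rdinv=\tdinv-v$ via Corollary~\ref{MainCor} and Theorem~\ref{Result4}, and then do a local case analysis at the column of the car $j$ being shifted. But two of the mechanisms you describe are incorrect, and if you carried them out as stated you would not get the right count.

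First, there are no forced slides in this lemma. In the previous lemma a larger car could be pushed one unit northeast because a new smaller car was inserted \emph{beneath} it; here $j$ is simply re-chosen at the next admissible position, and every other placed car stays fixed. So your ``auxiliary event contributes $0$'' paragraph should be removed entirely.

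Second, your dichotomy ``either one new diagonal inversion is created, or one valley is destroyed'' is too coarse. The paper's analysis tracks two anchors: the car $i$ that $j$ sat on or beside \emph{before} the move, and the car $k$ that $j$ sits on or beside \emph{after} the move. Each of $i,k$ is either in $j$'s classical diagonal (and then larger than $j$) or one diagonal below (and then smaller). That gives four cases, and the bookkeeping is not a single $\pm1$:
\begin{itemize}
\item[] \textbf{Case 3} ($i$ below, $k$ same diagonal): leaving $i$ creates a secondary inversion $(i,j)$; arriving beside $k$ creates a primary inversion $(j,k)$ \emph{and} a new corner between $k$ and $j$. Net: $+2$ to $\tdinv$, $+1$ to $v$, so $+1$ to $\rdinv$.
\item[] \textbf{Case 4} ($i$ and $k$ both same diagonal): a corner at $i$ is destroyed, a corner at $k$ is created (net $0$ to $v$), and one primary inversion $(j,k)$ is created.
\end{itemize}
Neither of these is ``exactly one new inversion'' or ``exactly one valley destroyed.'' The correct pattern is: the \emph{departure} from $i$ contributes $+1$ (a secondary inversion if $i$ is below, a destroyed corner if $i$ is in $j$'s diagonal), and the \emph{arrival} at $k$ contributes $0$ (nothing if $k$ is below, a primary inversion cancelled by a created corner if $k$ is in $j$'s diagonal). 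One also needs the observation that cars strictly between the two positions are either at least two diagonals below $j$ or are in the diagonal just below but larger than $j$, so they never attack $j$; hence only $i$ and $k$ matter for $\tdinv$. Your reference to Figure~\ref{dinvproof} is the right endpoint, but the narrative leading there should be this two-anchor, four-case decomposition rather than the single-event description you gave.
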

\begin{proof}  In all the below cases, let $j$ be the car moved northeast, $i$ be the first car (weakly) southwest of $j$ (i.e.\ the last car $j$ was placed beside) before the move and $k$ be the first car to the southwest after the move.  Note that $i$ and $k$ are both either bigger than $j$ and in the same diagonal (because we place bigger cars before smaller cars within a diagonal) or smaller than $j$ and in one diagonal lower.  Furthermore note that any cars between $j$ and $k$ are in a lower diagonal than $j$; if they are in the first diagonal below $j$, they must be larger than $j$ (or else we would place car $j$ on this smaller car and it would be $k$ itself). This means in particular that moving $j$ cannot create a diagonal inversion between $j$ and any intermediate car and we need only concern ourselves with diagonal inversions between the three marked cars. The below cases are diagrammed in order in Figure \ref{dinvproof}, with relevant changes in diagonal inversions marked with a curved line and relevant changes in dinv correction (a negative value in these tall parking functions) marked with a square.
\begin{case} $i$ and $k$ are in a lower diagonal than $j$ \end{case}
We must have $i<j$ and $k<j$. Then $i$ and $j$ create a new secondary diagonal inversion.
\begin{case} $i$ is in the same diagonal as $j$, while $k$ is in the next lowest diagonal \end{case}
Thus $k<j<i$.  A corner between $i$ and $j$ is no longer present after $j$ moves, resulting in a net change in diagonal inversion of one.
\begin{case} $k$ is in the same diagonal as $j$, while $i$ is in the next lowest diagonal \end{case}
In this case $i<j<k$.  Moving $j$ past $k$ creates a new corner between $k$ and $j$, a new primary dinv between $k$ and $j$, and a new secondary dinv between $i$ and $j$.
\begin{case} $i$, $j$, and $k$ are all in the same diagonal \end{case} Thus $j<i$ and $j<k$.  While a corner between $i$ and $j$ is destroyed by moving $j$, a corner between $k$ and $j$ is created.  Moreover, a primary inversion is created between $k$ and $j$.
\end{proof}

\section{Final remarks}
The results of this paper highlight several unanswered combinatorial questions related to the rational shuffle conjectures. For example, is there a ``fermionic'' type formula for all rational cases?  Here, the natural analogue of a diagonal word is not quite so clear, as we have no natural definition of the set of all cars contained within a diagonal. 

Our final result shows that the classical shuffle conjecture is consistent with the $(n-1,n)$ rational shuffle conjecture.  Since there are a plethora of relations between $Q_{kn,km}$ for all choices of $k$, $n$, and $m$, it begs the question: ``What similar combinatorial relations can be shown consistent with the conjectures?'' Perhaps the relations between the various conjectures will eventually reveal a way to prove the entire family of conjectures true by some kind of induction.

\bibliographystyle{elsarticle-num} 
\bibliography{mndinvidentity-arxiv-revised}

\end{document}